\newtheorem{theorem}{Theorem}[section]
\newtheorem{lemma}[theorem]{Lemma}
\theoremstyle{definition}
\newtheorem{definition}[theorem]{Definition}
\newtheorem{remark}{Remark}
\numberwithin{equation}{section}
\title[3D NAVIER-STOKES-KORTEWEG]{A blow-up criterion for the strong solutions to the nonhomogeneous Navier-Stokes-Korteweg equations in dimension three}
\date{\today}
\email{lihuanyuan1111@163.com}
\begin{document}
\maketitle

\centerline{\scshape Huanyuan Li}
\medskip
{\footnotesize
 \centerline{School of Mathematics and Statistics, Zhengzhou University,}
   \centerline{Zhengzhou, 450001, People's Republic of China}

} 


\begin{abstract}
This paper proves a Serrin's type blow-up criterion for the 3D density-dependent Navier-Stokes-Korteweg equations with vacuum. It is shown that if the density $\rho$ and velocity field $u$ satisfy  $\|\nabla \rho\|_{L^{\infty}(0,T; W^{1,q})}$ $ + \| u\|_{L^s(0,T; L^r_{\omega})}$ $< \infty$ for some $q>3$, and any $(r,s)$ satisfying $\frac{2}{s}+\frac{3}{r} \le 1,~~3 <r \le \infty,$ then the strong solutions to the density-dependent Navier-Stokes-Korteweg equations can exist globally over $[0,T]$, here $L^r_{\omega}$ denotes the weak $L^r$ space.

\noindent \textbf{Keywords:} Navier-Stokes-Korteweg; Blow-up criterion; Vacuum; Strong solution
\end{abstract}

\section{Introduction and main result}

It is well-known that some known mathematical results on the homogeneous incompressible Navier-Stokes equations between the dimension three and two are very different. For example, the global well-posedness of the two-dimensional incompressible Navier-Stokes equations has been proved long time ago, however, the three-dimensional global well-posedness for large initial data is still a famous open problem in the partial differential equations. And we believe that the similar dimensional differences also appear in the analysis of the nonhomogeneous fluid dynamics.This is a continuous work of {\cite{Li}}, in which the author established a blow-up criterion for the strong solutions to the initial and boundary value problem of the nonhomogeneous incompressible Navier-Stokes-Korteweg equations in dimension two. And the purpose of this paper is to establish a blow-up criterion for the strong solutions to the initial and boundary value problem of the nonhomogeneous incompressible Navier-Stokes-Korteweg equations in dimension three, which will involve not only the density but also the velocity field. And our result also indicates the famous Serrin's criterion for the classical (homogeneous) Navier-Stokes equations.

The time evolution of the density $\rho = \rho(x, t)$, velocity field $u = (u_1, u_2, u_3)(x, t) $ and pressure $P = P(x, t)$ of a general viscous capillary fluid is governed by the nonhomogeneous incompressible Navier-Stokes-Korteweg equations
\begin{equation} \label{NSK}
\left\{
\begin{aligned}
&\partial_t\rho + \mathrm{div}(\rho u) = 0, \\
&\partial_t(\rho u) + \mathrm{div} (\rho u\otimes u) - \mathrm{div}(2\mu(\rho)d) +\nabla P+ \mathrm{div}(\kappa(\rho)\nabla \rho\otimes \nabla \rho) = 0, \\
& \mathrm{div} u=0, \\
\end{aligned}
\right.
\end{equation}
where $x \in \Omega $ is the spatial coordinate, and $t \ge 0$ is the time. In this paper, $\Omega$ is a bounded domain with smooth boundary in $\mathbb{R}^3$. 
$$ d = \frac{1}{2} \Big[ \nabla u + (\nabla u)^T \Big] $$
denotes the deformation tensor of the matrix form with the $ij$ component $\frac{1}{2} (\partial u_i / \partial x_j + \partial u_i / \partial x_j  )$. $\kappa=\kappa(\rho)$, which is a $C^1$ nonnegative function of the density $\rho$, stands for the capillary coefficient. And $\mu= \mu(\rho)$ is the viscosity coefficient of the fluids, which is assumed to be a function of density $\rho$ satisfying
\begin{equation}\label{viscosity}
 \mu \in C^1[0,\infty),~~ \rm{and} ~~\mu \ge \underline{\mu} > 0 ~~ \rm{on} ~ [0,\infty)
\end{equation}
for some positive constant $\underline{\mu}$.

We focus on the system \eqref{NSK}-\eqref{viscosity} with the initial and boundary conditions:
\begin{equation} \label{initial}
u =0, \quad \mathrm{on}~ \partial \Omega \times [0,T),
\end{equation}
\begin{equation} \label{bdy}
(\rho,u)|_{t=0} = (\rho_0,u_0) \quad \mathrm{in}~\Omega.
\end{equation}

When $\kappa \equiv 0$, the system \eqref{NSK}-\eqref{bdy} are the famous nonhomogeneous incompressible Navier-Stokes equations with density-dependent viscosity. Cho and Kim {\cite{ChoY}} proved the local existence of unique strong solution for all initial data satisfying a compatibility condition. And later Huang and Wang 
{\cite{HW3}} proved  the strong solution exists globally in time when the initial gradient of the velocity is suitably small. For the related progress,  see {\cite{HW1}-{\cite{HW3}} references and therein.

Let us come back to the fluids with capillary effect, that is, $\kappa(\rho)$ depends on the density $\rho$. As far as I know, the first local existence of unique strong solution was obtained by Tan and Wang {\cite{TW}} when the capillary coefficients $\kappa$ is a nonnegative constant. And very recently, Wang {\cite{W}} extended their result to the case when $\kappa(\rho)$ is a $C^1$ function of the density. 

First we give the definition of strong solutions to the initial and boundary problem  \eqref{NSK}-\eqref{bdy} as follows.

\begin{definition}[Strong solutions]
A pair of functions $(\rho \ge 0, u, P)$ is called a strong solution to the problem \eqref{NSK}-\eqref{bdy} in $\Omega \times (0,T)$, if for some $q_0 \in (3, 6]$, 
\begin{equation} \label{solreg1}
\begin{aligned}
&\rho \in C([0,T];W^{2,q_0}), \quad  u \in C([0,T];H^1_0\cap H^2), \quad \nabla^2 u \in L^2(0,T;L^{q_0}), \\
&\rho_t \in C([0,T];W^{1,q_0}), \quad  \nabla P \in C([0,T]; L^2)\cap L^2(0,T;L^{q_0}), \quad u_t \in L^2(0,T;H^1_0),\\
\end{aligned}
\end{equation}
and $(\rho, u, P)$ satisfies  \eqref{NSK} a.e. in $\Omega \times (0,T)$.
\end{definition}

In the case when the initial data may vanish in an open subset of $\Omega$, that is, the initial vacuum is allowed, the following local well-posedness of strong solution to  \eqref{NSK}-\eqref{bdy} was obtained by Wang {\cite{W}}.
\begin{theorem}  \label{local}
Assume that the initial data $(\rho_0,u_0)$ satisfies the regularity condition
\begin{equation} \label{initialreg}
0 \le \rho_0 \in W^{2,q},\quad 3<q \le 6, \quad u_0 \in H^{1}_{0,\sigma} \cap H^2,
\end{equation}
and the compatibility condition
\begin{equation} \label{initialcompa}
-\mathrm{div}(\mu(\rho_0) (\nabla u_0 + (\nabla u_0)^T))  + \nabla P_0+ \mathrm{div}(\kappa(\rho_0)\nabla \rho_0\otimes \nabla \rho_0) = \rho_0^{1/2}g,
\end{equation}
for some $(P_0,g) \in H^1 \times L^2$. Then  there exist a small time  $T$ and a unique strong solution $(\rho,u,P) $ to the initial boundary value problem \eqref{NSK}-\eqref{bdy}.
\end{theorem}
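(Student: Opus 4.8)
The plan is to establish local existence by a linearization-and-iteration scheme combined with uniform a priori estimates and a contraction/compactness argument, the principal difficulty being the possible degeneracy of the momentum equation where the density vanishes. Because $\rho$ may be zero on an open set, one cannot invert $\rho$ to solve for $u_t$, and the role of the compatibility condition \eqref{initialcompa} is precisely to guarantee that $\sqrt{\rho}\,u_t$ is controlled at $t=0$, which is indispensable for the higher-order estimates that place the solution in the class \eqref{solreg1}.

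First I would set up the iteration. Starting from $u^0\equiv u_0$ (or a smooth approximation), and given the $k$-th velocity iterate $u^k$, I define $\rho^{k+1}$ as the solution of the linear transport equation
\[
\partial_t \rho^{k+1} + u^k \cdot \nabla \rho^{k+1} = 0, \qquad \rho^{k+1}|_{t=0} = \rho_0,
\]
which is well posed since $\mathrm{div}\,u^k=0$; by the theory of transport equations the $W^{2,q}$ regularity of $\rho_0$ is propagated, with a bound depending on $\int_0^t \|\nabla u^k\|_{W^{1,q}}\,d\tau$. Then $u^{k+1}$ solves the linear Stokes-type problem
\[
\rho^{k+1}\partial_t u^{k+1} + \rho^{k+1} u^k\cdot\nabla u^{k+1} - \mathrm{div}\big(2\mu(\rho^{k+1})d(u^{k+1})\big) + \nabla P^{k+1} = F^{k+1},
\]
with $\mathrm{div}\,u^{k+1}=0$, $u^{k+1}|_{\partial\Omega}=0$, $u^{k+1}|_{t=0}=u_0$, where the capillary forcing $F^{k+1}=-\mathrm{div}(\kappa(\rho^{k+1})\nabla\rho^{k+1}\otimes\nabla\rho^{k+1})$ is a given inhomogeneity. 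Expanding it via the identity $\mathrm{div}(\kappa(\rho)\nabla\rho\otimes\nabla\rho)=\kappa'(\rho)|\nabla\rho|^2\nabla\rho+\kappa(\rho)\Delta\rho\,\nabla\rho+\tfrac12\kappa(\rho)\nabla|\nabla\rho|^2$ shows $F^{k+1}$ is controlled in $L^2$ by the $W^{2,q}$-norm of $\rho^{k+1}$, which is exactly why the density must carry two derivatives.

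Next I would derive the uniform a priori estimates, the technical core. After the basic energy estimate (testing with $u^{k+1}$), I would test the momentum equation with $u^{k+1}_t$, producing $\int\rho^{k+1}|u^{k+1}_t|^2$ on the left together with $\tfrac12\frac{d}{dt}\int\mu(\rho^{k+1})|d(u^{k+1})|^2$, and remainder terms (the convective contribution, the $\mu_t=\mu'(\rho)\rho_t$ term, and $\int F^{k+1}\cdot u^{k+1}_t$) to be absorbed using the uniform bounds. The compatibility condition \eqref{initialcompa} is invoked here to bound $\|\sqrt{\rho}\,u_t\|_{L^2}$ at $t=0$, so that the time integral of $\|\sqrt{\rho^{k+1}}\,u^{k+1}_t\|_{L^2}^2$ stays finite. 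Reading the momentum equation as a stationary Stokes system with variable coefficient $\mu(\rho^{k+1})\ge\underline{\mu}>0$ and right-hand side $F^{k+1}-\rho^{k+1}u^{k+1}_t-\rho^{k+1}u^k\cdot\nabla u^{k+1}$, and using maximal regularity for the Stokes operator, then upgrades the bound to $\nabla^2 u^{k+1}\in L^2(0,T;L^{q_0})$ and $\nabla P^{k+1}\in L^2(0,T;L^{q_0})\cap C([0,T];L^2)$. Feeding these velocity bounds back into the transport estimate closes the loop and gives bounds uniform in $k$ on a short interval $[0,T]$.

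Finally, with uniform bounds in hand, I would estimate the differences $\rho^{k+1}-\rho^k$ and $u^{k+1}-u^k$ in a lower-order norm such as $L^\infty(L^2)\cap L^2(H^1)$, absorbing the coupling for small $T$ to obtain a Cauchy sequence whose limit is a strong solution satisfying \eqref{solreg1}, after passing to the limit in the weak formulation and using weak lower-semicontinuity of the norms; uniqueness follows from the same difference estimate applied to two hypothetical solutions. I expect the main obstacle to be the higher-order estimate in the presence of vacuum: controlling $\nabla^2 u$ and $\nabla P$ up to $L^{q_0}$ while the coefficient $\rho$ degenerates forces a careful use of the compatibility condition and of the stationary Stokes theory with the merely $C^1$ viscosity $\mu(\rho)$, where the capillary contribution $\kappa(\rho)\Delta\rho\,\nabla\rho$ must be absorbed without losing the $W^{2,q}$ control of the density.
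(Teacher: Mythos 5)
The paper does not prove Theorem \ref{local} at all: it is quoted verbatim from Wang \cite{W} (which in turn builds on the linearization schemes of Cho--Kim \cite{ChoY} and Tan--Wang \cite{TW}), so there is no in-paper proof to compare against. Your outline --- transport equation for $\rho^{k+1}$ driven by $u^k$, a linearized density-dependent Stokes system for $u^{k+1}$ with the capillary term treated as a known forcing, energy and $u_t$-estimates seeded by the compatibility condition \eqref{initialcompa}, Stokes maximal regularity to reach $W^{2,q_0}$, and a low-norm contraction for convergence and uniqueness --- is precisely the standard strategy of that reference, and the expansion you give for $\mathrm{div}(\kappa(\rho)\nabla\rho\otimes\nabla\rho)$ is correct. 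One point you should not leave implicit: in the step where you test the momentum equation with $u^{k+1}_t$, the pairing $\int F^{k+1}\cdot u^{k+1}_t\,dx$ cannot be estimated directly, because in the presence of vacuum only $\|\sqrt{\rho^{k+1}}\,u^{k+1}_t\|_{L^2}$ (not $\|u^{k+1}_t\|_{L^2}$) is under control; one must integrate by parts and extract the total time derivative $\frac{d}{dt}\int\kappa(\rho)\nabla\rho\otimes\nabla\rho:\nabla u\,dx$, leaving commutator terms involving $\rho_t=-u\cdot\nabla\rho$, exactly as is done in the nonlinear a priori estimate of Lemma \ref{lem2} of this paper. With that amendment the sketch is a faithful account of the cited proof.
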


Motivated by the work of Kim {\cite{K}}, in which a Serrin's type blow-up criterion for the 3D nonhomogeneous incompressible Navier-Stokes flow was established, we derive a similar blow-up criterion for the nonhomogeneous Navier-Stokes-Korteweg equations with density-dependent viscosity and capillary coefficients in dimension three. More precisely, our main result can be stated as follows.
\begin{theorem} \label{thm}
Assume that the initial data $(\rho_0,u_0)$ satisfies the regularity condition \eqref{initialreg} and
the compatibility condition \eqref{initialcompa}. Let $(\rho, u, P)$ be a strong solution of the problem \eqref{NSK}-\eqref{bdy} satisfying \eqref{solreg1}. If $0<T^*<\infty$ is the maximal time of existence, then
\begin{equation} \label{blowcriterion}
\lim_{T \rightarrow T*}(\|\nabla \rho\|_{L^{\infty}(0,T; W^{1,q})} + \| u\|_{L^s(0,T; L^r_{\omega})}) = \infty.
\end{equation}
for any $r$ and $s$ satisfying 
\begin{equation}  \label{serrincomponent}
\frac{2}{s}+\frac{3}{r} \le 1, \quad 3 <r \le \infty,
\end{equation}
where $L^r_{\omega}$ denotes the weak $L^r$ space.
\end{theorem}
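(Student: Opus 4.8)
The plan is to argue by contradiction together with the local existence theorem (Theorem \ref{local}). Suppose, contrary to \eqref{blowcriterion}, that
$$M := \sup_{0 \le T < T^*}\left(\|\nabla \rho\|_{L^{\infty}(0,T; W^{1,q})} + \| u\|_{L^s(0,T; L^r_{\omega})}\right) < \infty.$$
The goal is then to establish a priori bounds, uniform on $[0,T^*)$, for all the norms appearing in the regularity class \eqref{solreg1}; once this is done, taking the solution at a time close to $T^*$ as new initial data (the compatibility condition \eqref{initialcompa} being inherited), Theorem \ref{local} extends the solution past $T^*$, contradicting the maximality of $T^*$. Throughout, I would treat the capillary stress $\mathrm{div}(\kappa(\rho)\nabla\rho\otimes\nabla\rho)$ as a known forcing term: since $q>3$ gives the Sobolev embedding $W^{1,q}(\Omega)\hookrightarrow L^\infty(\Omega)$, the bound $M$ forces $\|\nabla\rho\|_{L^\infty(0,T^*;L^\infty)}\le C(M)$, and as the stress is a combination of the cubic term $\kappa'(\rho)|\nabla\rho|^2\nabla\rho$ and the term $\kappa(\rho)\nabla\rho\cdot\nabla^2\rho$, it is bounded in $L^\infty(0,T^*;L^q)$ by $C(M)$. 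Moreover, since $\rho$ solves the transport equation $\rho_t + u\cdot\nabla\rho = 0$ with $\mathrm{div}\,u = 0$, the sup of $\rho$ is non-increasing, so $0\le\rho\le\bar\rho$ on $[0,T^*)$. It also suffices to treat the endpoint case $\tfrac{2}{s}+\tfrac{3}{r}=1$ in \eqref{serrincomponent}, since on the finite interval $[0,T^*)$ any larger $s$ embeds into this one.

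First I would record the basic energy estimate: multiplying the momentum equation by $u$, integrating by parts, and using \eqref{viscosity} together with the source bound above yields control of $\sup_t\|\sqrt\rho\,u\|_{L^2}$ and $\|\nabla u\|_{L^2(0,T^*;L^2)}$. The central estimate is the first-order one, obtained by testing $\rho u_t + \rho u\cdot\nabla u - \mathrm{div}(2\mu(\rho)d) + \nabla P = -\mathrm{div}(\kappa(\rho)\nabla\rho\otimes\nabla\rho)$ with $u_t$. The viscous term produces $\tfrac{d}{dt}\int\mu(\rho)|d|^2$ up to a remainder $\int\mu'(\rho)\rho_t|d|^2$ that is absorbed via $\rho_t=-u\cdot\nabla\rho$ and the $L^\infty$ bound on $\nabla\rho$. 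The main difficulty is the convection term, which I would bound using the H\"older inequality in Lorentz spaces by
$$\Big|\int \rho\,(u\cdot\nabla u)\cdot u_t\,dx\Big| \le \sqrt{\bar\rho}\,\|\sqrt\rho\,u_t\|_{L^2}\,\|u\|_{L^{r}_\omega}\,\|\nabla u\|_{L^{\frac{2r}{r-2},2}},$$
and then by the Gagliardo--Nirenberg interpolation $\|\nabla u\|_{L^{\frac{2r}{r-2},2}} \le C\|\nabla u\|_{L^2}^{1-3/r}\|\nabla^2 u\|_{L^2}^{3/r}$, followed by Young's inequality (legitimate precisely because $r>3$). This gives, schematically,
$$\frac{d}{dt}\int \mu(\rho)|d|^2 + \|\sqrt\rho\,u_t\|_{L^2}^2 \le \tfrac12\|\nabla^2 u\|_{L^2}^2 + C\big(1+\|u\|_{L^r_\omega}^{s}\big)\|\nabla u\|_{L^2}^2 + C(M),$$
where the power $s=\tfrac{2r}{r-3}$ is exactly the one fixed by the Serrin relation $\tfrac{2}{s}+\tfrac{3}{r}=1$.

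To close this I would invoke elliptic (Stokes) regularity for the stationary system $-\mathrm{div}(2\mu(\rho)d)+\nabla P = -\rho u_t - \rho u\cdot\nabla u - \mathrm{div}(\kappa(\rho)\nabla\rho\otimes\nabla\rho)$, which bounds $\|\nabla^2 u\|_{L^2}+\|\nabla P\|_{L^2}$ by $\|\sqrt\rho\,u_t\|_{L^2}$ plus lower-order and source terms, the variable coefficient $\mu(\rho)$ being handled through its Lipschitz-in-$x$ regularity inherited from $\rho$. Substituting back absorbs the $\tfrac12\|\nabla^2 u\|_{L^2}^2$ term, and Gr\"onwall's inequality together with $\|u\|_{L^r_\omega}^{s}\in L^1(0,T^*)$ produces uniform bounds on $\sup_t\|\nabla u\|_{L^2}$, on $\|\sqrt\rho\,u_t\|_{L^2(0,T^*;L^2)}$, and hence on $\|\nabla^2 u\|_{L^2(0,T^*;L^2)}$ and $\|\nabla P\|_{L^2(0,T^*;L^2)}$.

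The remaining second-order estimates proceed by differentiating the momentum equation in time and testing with $u_t$: this yields $\sup_t\|\sqrt\rho\,u_t\|_{L^2}$ and $\|\nabla u_t\|_{L^2(0,T^*;L^2)}$, where the new convection and capillary contributions are again treated by Lorentz--H\"older together with the $W^{1,q}$ control of $\nabla\rho$ and of $\rho_t=-u\cdot\nabla\rho$. Bootstrapping through elliptic regularity in $L^{q_0}$ then upgrades these to $\|\nabla^2 u\|_{L^2(0,T^*;L^{q_0})}$ and $\nabla P\in L^2(0,T^*;L^{q_0})$, while the transport structure propagates $\rho\in C([0,T^*];W^{2,q_0})$ and $\rho_t\in C([0,T^*];W^{1,q_0})$, as required by \eqref{solreg1}. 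I expect the principal obstacle to be the endpoint convection estimate: because $u$ is controlled only in the weak space $L^r_\omega=L^{r,\infty}$ rather than in $L^r$, the ordinary H\"older inequality is unavailable and one must work systematically in Lorentz spaces, keeping the interpolation exponents aligned with the borderline relation $\tfrac{2}{s}+\tfrac{3}{r}=1$ so that the resulting time weight $\|u\|_{L^r_\omega}^{s}$ is exactly integrable. Coupling this estimate with the density-dependent, variable-coefficient elliptic regularity and with the capillary source is the second technical point requiring care.
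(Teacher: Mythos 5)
Your proposal is correct and follows essentially the same route as the paper: contradiction via the local existence theorem, the basic energy estimate, testing with $u_t$ and controlling the convection term through the weak-$L^r$ space, closing with the density-dependent Stokes regularity, then the time-differentiated estimate and the $L^{q}$ elliptic bootstrap. Your explicit Lorentz--H\"older plus Gagliardo--Nirenberg interpolation step, with Young exponent $s=\tfrac{2r}{r-3}$, is precisely the content of the paper's Lemma \ref{weaksobolev} (quoted there from Kim and Xu--Zhang), so this is the same argument with that lemma unpacked rather than cited.
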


\begin{remark}
Compared to the two-dimensional blow-up criterion established in \cite{Li} by the author, the blow-up criterion obtained in this paper involves not only the density but aslo the velocity field, see \eqref{blowcriterion}. And when $\rho_0 \equiv 1$, the nonhomogeneous incompressible Navier-Stokes-Korteweg equations \eqref{NSK} reduce to the classical incompressible Navier-Stokes equations, therefore our blow-up criterion indicates the generalization of Serrin's criterion using weak Lesbegue spaces for incompressible Navier-Stokes equations, see the work of H. Sohr (2001), S. Bosia et. al. (2014).

\end{remark}

The proof of Theorem \ref{thm} is based on the contradiction argument. In view of the local existence result, to prove Theorem \ref{thm}, it suffices to verify that $(\rho, u)$ satisfy \eqref{initialreg} and \eqref{initialcompa} at the time $T^*$ under the assumption of the left hand side of \eqref{blowcriterion} is finite. Unlike the Navier-Stokes equations treated in Kim {\cite{K}}, the use of weak Lesbegue space makes it more difficult to obtain some estimates because of the apperance of capillary effect. To overcome the difficulty,  we make good use of the finiteness of $\|\nabla \rho\|_{W^{1,q}}$ and other interpolation techniques in Lorentz space.

The remainder of this paper is arranged as follows. In Sec. 2, we give some auxiliary lemmas which is useful in our later analysis. The proof of Theorem \ref{thm} will be done by combining the contradiction argument with the estimates derived in Sec. 3.
\section{Preliminaries}

\subsection{Notations and general inequalities}

$\Omega$ is a bounded domain in $\mathbb{R}^3$ with smooth boundary $\partial \Omega$. For notations simplicity below, we omit the integration domain $\Omega$. And
for $1 \le r \le \infty$ and $ k \in \mathbb{N}$, the Lesbegue and Sobolev spaces are defined in a standard way,
$$ L^r = L^r (\Omega), \quad W^{k,r} = \{ f \in L^r : \nabla^k f \in L^r \}, \quad H^k = W^{k,2}.$$
The following Gagliardo-Nirenberg inequality will be used frequently in the later analysis.
\begin{lemma}[Gagliardo-Nirenberg inequality] \label{GNinequality}
Let $\Omega$ be a domain of $\mathbb{R}^3$ with smooth boundary $\partial \Omega$. For $p \in[2,6], q \in (1,\infty)$ and $r \in (3, \infty)$, there exists some generic constants $C >0$ that may depend on $q$ and $r$ such that for $f \in H^1$ satisfying $f|_{\partial \Omega}=0$, and $g \in L^q \cap D^{1,r}$, we have
\begin{equation}  \label{GNin1}
\|f\|_{L^p}^p \le C \|f\|_{L^2}^{(6-p)/2}\|\nabla f\|_{L^2}^{(3p-6)/2},
\end{equation}
\begin{equation} \label{GNin2}
\|g\|_{L^{\infty}} \le C \|g\|_{L^q}^{q(r-3)/(3r+q(r-3))}\|\nabla g\|_{L^r}^{3r/(3r+q(r-3))}.
\end{equation}
\end{lemma}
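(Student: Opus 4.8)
The plan is to derive both inequalities from two classical building blocks on $\mathbb{R}^3$ — the critical Sobolev embedding $H^1\hookrightarrow L^6$ for \eqref{GNin1}, and Morrey's inequality for \eqref{GNin2} — together with elementary H\"older interpolation and a scaling optimization. The exponents appearing on the right-hand sides are precisely those forced by scaling invariance, which is how I would guess them and then confirm by computation.

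For \eqref{GNin1}, first I would use the hypothesis $f|_{\partial\Omega}=0$, i.e. $f\in H^1_0(\Omega)$, to invoke the Sobolev--Poincar\'e inequality $\|f\|_{L^6}\le C\|\nabla f\|_{L^2}$ (the vanishing trace is exactly what removes the additive lower-order term and makes the bound homogeneous). Next, for $p\in[2,6]$ I would interpolate between $L^2$ and $L^6$ by H\"older: writing $\frac1p=\frac{\alpha}{2}+\frac{1-\alpha}{6}$ gives $\alpha=\frac{6-p}{2p}$, hence $\|f\|_{L^p}\le\|f\|_{L^2}^{\alpha}\|f\|_{L^6}^{1-\alpha}$. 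Raising to the $p$-th power and inserting the Sobolev bound yields $\|f\|_{L^p}^p\le C\|f\|_{L^2}^{(6-p)/2}\|f\|_{L^6}^{(3p-6)/2}\le C\|f\|_{L^2}^{(6-p)/2}\|\nabla f\|_{L^2}^{(3p-6)/2}$, which is exactly \eqref{GNin1}.

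For \eqref{GNin2} I would first extend $g$ from $\Omega$ to $\mathbb{R}^3$ using a bounded extension operator for the smooth domain, so that $\|g\|_{L^q(\mathbb{R}^3)}$ and $\|\nabla g\|_{L^r(\mathbb{R}^3)}$ are controlled by the corresponding norms on $\Omega$ while $\|g\|_{L^\infty(\Omega)}\le\|g\|_{L^\infty(\mathbb{R}^3)}$. On any ball $B_\rho$ of radius $\rho$ I would combine two estimates: Morrey's inequality (valid since $r>3$), $|g(x)-\bar g_{B_\rho}|\le C\rho^{1-3/r}\|\nabla g\|_{L^r(B_\rho)}$, and the H\"older bound on the mean $|\bar g_{B_\rho}|\le C\rho^{-3/q}\|g\|_{L^q(B_\rho)}$. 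Together these give, for every $\rho>0$, the pointwise bound $|g(x)|\le C\bigl(\rho^{1-3/r}\|\nabla g\|_{L^r}+\rho^{-3/q}\|g\|_{L^q}\bigr)$. Minimizing the right-hand side over $\rho$ — the optimal choice being $\rho\sim(\|g\|_{L^q}/\|\nabla g\|_{L^r})^{1/(\alpha+\beta)}$ with $\alpha=1-3/r$ and $\beta=3/q$ — produces the product $\|\nabla g\|_{L^r}^{\beta/(\alpha+\beta)}\|g\|_{L^q}^{\alpha/(\alpha+\beta)}$, and a short computation shows $\frac{\beta}{\alpha+\beta}=\frac{3r}{3r+q(r-3)}$ and $\frac{\alpha}{\alpha+\beta}=\frac{q(r-3)}{3r+q(r-3)}$, which is exactly \eqref{GNin2}.

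The hard part is handling the boundary: the ball-and-Morrey argument for \eqref{GNin2} runs cleanly on $\mathbb{R}^3$ but cannot be applied directly near $\partial\Omega$, where a full ball need not lie inside $\Omega$. This is why I would route the second inequality through the extension operator (equivalently, exploit the cone condition of the smooth domain), and it is precisely at this step that the generic constant $C$ picks up its dependence on $\Omega$, $q$ and $r$. For \eqref{GNin1} the analogous difficulty is absent, because the zero trace lets $f$ be extended by zero with no loss in the $L^2$ and $\nabla$-norms. As an alternative one could simply cite the general Gagliardo--Nirenberg interpolation theorem, but the self-contained derivation above is short and makes the origin of the exponents transparent.
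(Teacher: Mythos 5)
Your derivation of \eqref{GNin1} is correct: extending $f\in H^1_0(\Omega)$ by zero, applying the Sobolev inequality on $\mathbb{R}^3$, and interpolating by H\"older gives exactly the stated exponents. Note that the paper offers no proof of this lemma at all (it only cites Ladyzhenskaya--Solonnikov--Uraltseva, p.~62), so a self-contained argument is in principle a welcome, genuinely different route. Your whole-space argument for \eqref{GNin2} (Morrey oscillation estimate plus the H\"older bound on the ball average, then optimization in $\rho$) is also correct as a proof on $\mathbb{R}^3$, with the right exponents.

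The gap is exactly the step you yourself called the hard part: the extension. There is no extension operator $E$ with the two properties you require, namely $\|Eg\|_{L^q(\mathbb{R}^3)}\le C\|g\|_{L^q(\Omega)}$ \emph{and} $\|\nabla (Eg)\|_{L^r(\mathbb{R}^3)}\le C\|\nabla g\|_{L^r(\Omega)}$. Test them on $g\equiv 1$: the second forces $\nabla(Eg)=0$ a.e., hence $Eg\equiv 1$ on the connected set $\mathbb{R}^3$, which fails to lie in $L^q(\mathbb{R}^3)$ for finite $q$, contradicting the first. What the Stein/Calder\'on extension actually provides is $\|\nabla(Eg)\|_{L^r(\mathbb{R}^3)}\le C\|g\|_{W^{1,r}(\Omega)}$, so your argument only proves the inhomogeneous bound $\|g\|_{L^\infty}\le C\|g\|_{L^q}^{\theta}\|g\|_{W^{1,r}}^{1-\theta}$. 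This defect is not repairable, because on a bounded domain --- and $\Omega$ is bounded in this paper, which is where the lemma is applied --- inequality \eqref{GNin2} as written is simply false: for a nonzero constant $g$ the left-hand side is positive while the right-hand side vanishes. The purely multiplicative form is valid on all of $\mathbb{R}^3$, where nonzero constants are excluded by the requirement $g\in L^q$; on a bounded $\Omega$ one must either keep the full $W^{1,r}$ norm or impose a normalization such as zero trace or zero mean. Your review of the statement should therefore flag this, and it is worth observing that the inhomogeneous version your argument does establish suffices for every use made of \eqref{GNin2} in the paper, since it is always invoked with $g=\nabla\rho$ (and $r=q$) under the standing assumption that $\|\nabla\rho\|_{W^{1,q}}$ is finite.
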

See the proof of this lemma in Ladyzhenskaya et al. {\cite[P. 62]{LaSoN}}. \\
Denote the Lorentz space and its norm by $L^{p,q}$ and $\|\cdot\|_{L^{p,q}}$, respectively, where $1<p< \infty$ and $ 1 \le q \le \infty$. And we recall the weak-$L^p$ space $L^p_{\omega}$ which is defined as follows:
$$ L^p_{\omega} := \{ f \in L^1_{loc}: \|f \|_{L^p_{\omega}} = \sup_{\lambda>0} \lambda|\{|f(x)| > \lambda\}|^{\frac{1}{p}} < \infty \}.$$
And it should be noted that
$$ L^p \subsetneqq L^p_{\omega}, \quad L^{\infty}_{\omega} = L^{\infty}, \quad L^p_{\omega} = L^{p,\infty}.$$
For the details of Lorentz space, we refer to the first chapter in Grafakos {\cite{Gra}}.
The following lemma involving the weak Lesbegue spaces has been proved in Kim {\cite{K}}, Xu and Zhang {\cite{XW}}, which will play an important role in the subsequent analysis.
\begin{lemma} \label{weaksobolev}
Assume $g \in H^1$, and $f \in L^r_{\omega}$ with $r\in (3, \infty]$, then $f\cdot g \in L^2$. Furthermore, for any $\epsilon >0$, we have
\begin{equation}  \label{weakembed}
\|f\cdot g\|_{L^2}^2 \le \epsilon \|g\|_{H^1}^2 + C(\epsilon) (\|f\|_{L^r_{\omega}}^s +1) \|g\|_{L^2}^2,
\end{equation}
where $C$ is a positive constant depending only on $\epsilon, r$ and the domain $\Omega$.
\end{lemma}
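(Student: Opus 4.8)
The plan is to derive the stated estimate from a single application of H\"older's inequality in Lorentz spaces, followed by a Lorentz-space interpolation and a Young inequality. Throughout, the exponent $s$ is understood to be the Serrin companion of $r$, namely $s=\frac{2r}{r-3}$, so that $\frac{2}{s}+\frac{3}{r}=1$ (and $s=2$ when $r=\infty$). Since $L^r_{\omega}=L^{r,\infty}$, I would first invoke the generalized H\"older inequality for Lorentz spaces (O'Neil's inequality, see Grafakos \cite{Gra}): writing $L^2=L^{2,2}$ and choosing $p$ by $\frac1p=\frac12-\frac1r$, so that $p=\frac{2r}{r-2}\in(2,6)$, one has
\[
\|f\cdot g\|_{L^2}=\|f\cdot g\|_{L^{2,2}}\le C\,\|f\|_{L^{r,\infty}}\,\|g\|_{L^{p,2}},
\]
where the second Lorentz indices match because $\frac12=\frac1\infty+\frac12$. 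This already shows $f\cdot g\in L^2$ once we know $g\in L^{p,2}$, which will follow from $g\in H^1$.

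The heart of the argument is to bound $\|g\|_{L^{p,2}}$ by the $L^2$ and $H^1$ norms of $g$ with the correct interpolation weights. I would use the real-interpolation identity $L^{p,2}=(L^{2,2},L^{6,2})_{\theta,2}$, where $\theta$ solves $\frac1p=\frac{1-\theta}{2}+\frac{\theta}{6}$; with $\frac1p=\frac12-\frac1r$ this gives $\theta=\frac3r\in(0,1)$. The associated interpolation inequality then yields
\[
\|g\|_{L^{p,2}}\le C\,\|g\|_{L^2}^{1-\theta}\,\|g\|_{L^{6,2}}^{\theta}.
\]
The remaining factor is controlled by the refined (Lorentz) Sobolev embedding $\dot H^1(\mathbb{R}^3)\hookrightarrow L^{6,2}$, which on the bounded domain $\Omega$ gives $\|g\|_{L^{6,2}}\le C\|g\|_{H^1}$. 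Combining the last three displays produces
\[
\|f\cdot g\|_{L^2}\le C\,\|f\|_{L^r_{\omega}}\,\|g\|_{L^2}^{1-\theta}\,\|g\|_{H^1}^{\theta},\qquad \theta=\tfrac3r.
\]

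To reach the final form I would square this bound and apply Young's inequality to the product $\|g\|_{H^1}^{2\theta}\cdot\bigl(C\|f\|_{L^r_{\omega}}^2\|g\|_{L^2}^{2(1-\theta)}\bigr)$ with the conjugate exponents $\frac1\theta$ and $\frac{1}{1-\theta}$. The first term becomes $\epsilon\|g\|_{H^1}^2$, while the exponent on $\|f\|_{L^r_{\omega}}$ in the second term is $\frac{2}{1-\theta}=\frac{2r}{r-3}=s$, giving $C(\epsilon)\|f\|_{L^r_{\omega}}^s\|g\|_{L^2}^2$; the trivial bound $\|f\|_{L^r_{\omega}}^s\le \|f\|_{L^r_{\omega}}^s+1$ then yields \eqref{weakembed}. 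The endpoint $r=\infty$ is immediate since $L^\infty_{\omega}=L^\infty$, $\theta=0$ and $s=2$, so that $\|f\cdot g\|_{L^2}^2\le\|f\|_{L^\infty}^2\|g\|_{L^2}^2$. The main obstacle is the second paragraph: the plain Sobolev embedding only gives $H^1\hookrightarrow L^6=L^{6,6}$, whereas H\"older against $L^{r,\infty}$ forces the smaller target space $L^{p,2}$ with second index $2$. It is therefore essential to use the sharp embedding into the Lorentz space $L^{6,2}$ together with Lorentz interpolation; this is precisely the point at which working in weak-$L^r$ rather than $L^r$ must be paid for, and where care is needed to transfer the $\mathbb{R}^3$ embedding to the bounded domain $\Omega$ (for instance via an extension operator or the Poincar\'e inequality).
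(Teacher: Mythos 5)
Your argument is correct, but it follows a genuinely different route from the one the paper relies on. The paper does not prove Lemma \ref{weaksobolev} itself; it defers to Kim \cite{K} and Xu--Zhang \cite{XW}, where the estimate is obtained by an elementary truncation (level-set) argument: one splits $f=f\mathbf{1}_{\{|f|>k\}}+f\mathbf{1}_{\{|f|\le k\}}$, uses the layer-cake formula to show $\|f\mathbf{1}_{\{|f|>k\}}\|_{L^3}^3\le C k^{3-r}\|f\|_{L^r_{\omega}}^r$ (this is where $r>3$ enters), estimates $\|f\mathbf{1}_{\{|f|>k\}}\,g\|_{L^2}\le \|f\mathbf{1}_{\{|f|>k\}}\|_{L^3}\|g\|_{L^6}\le C k^{(3-r)/3}\|f\|_{L^r_{\omega}}^{r/3}\|g\|_{H^1}$ by the ordinary Sobolev embedding, bounds the remainder by $k\|g\|_{L^2}$, and then chooses $k\sim C(\epsilon)\|f\|_{L^r_{\omega}}^{r/(r-3)}$ so that $k^2\sim C(\epsilon)\|f\|_{L^r_{\omega}}^{s}$. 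Your proof instead invokes O'Neil's H\"older inequality in Lorentz spaces, real interpolation $(L^{2,2},L^{6,2})_{\theta,2}=L^{p,2}$, and the refined embedding $H^1\hookrightarrow L^{6,2}$; all of these steps are valid (the exponent bookkeeping $\theta=3/r$, $2/(1-\theta)=s$ checks out, and the extension-operator remark correctly handles the bounded domain), so the proof is sound. The trade-off is that your route requires noticeably heavier machinery, while the cited truncation proof needs only the distribution function and $H^1\hookrightarrow L^6$. One small overstatement: the sharp embedding into $L^{6,2}$ is not actually \emph{essential}, since $p=2r/(r-2)$ lies strictly between $2$ and $6$, so the off-diagonal Lorentz interpolation $\|g\|_{L^{p,2}}\le C\|g\|_{L^{2,\infty}}^{1-\theta}\|g\|_{L^{6,\infty}}^{\theta}$ already suffices and needs only $H^1\hookrightarrow L^6$. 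Finally, you prove the estimate for the critical exponent $s=2r/(r-3)$; for the lemma as used in the paper, where $(r,s)$ only satisfies $2/s+3/r\le 1$, the general case follows from $a^{s_0}\le a^{s}+1$ for $s\ge s_0$, which is absorbed by the $+1$ in \eqref{weakembed} --- worth a sentence to make explicit.
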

\subsection{Higher order estimates on $u$}
High-order a priori estimates of velocity field $u$ rely on the following regularity results for density-dependent Stokes equations.
\begin{lemma} \label{stokesesti}
Assume that $\rho \in W^{2,q}, 3<q<\infty$, and $0 \le \rho \le \bar{\rho}$. Let $(u,P) \in H^{1}_{0,\sigma}  \times L^2$
be the unique weak solution to the boundary value problem
\begin{equation} \label{stokeseq}
-\mathrm{div} (\mu(\rho)(\nabla u +(\nabla u)^T) + \nabla P = F, \quad \mathrm{div} u = 0 ~\text{in}~\Omega, ~~~\text{and} \int P dx = 0,
\end{equation}
where
$$ \mu \in C^1[0,\infty), ~~\underline{\mu} \le \mu(\rho) \le \bar{\mu} ~ on ~[0,\bar{\rho}]. $$
Then we have the following regularity results:

(1) If $F \in L^2$, then $(u,P) \in H^2 \times H^1$ and
\begin{equation} \label{stokesesti2}
\| u \|_{H^2} + \| P \|_{H^1} \le C (1+\|\nabla \rho\|_{L^{\infty}})\|F\|_{L^2},
\end{equation}

(2) If $F \in L^r$ for some $r \in (2,\infty)$, then $(u,P) \in W^{2,r} \times W^{1,r}$ and
\begin{equation} \label{stokesestir}
\| u \|_{W^{2,r}} + \| P \|_{W^{1,r}} \le C (1+\|\nabla \rho\|_{L^{\infty}})\|F\|_{L^r}.
\end{equation}
\end{lemma}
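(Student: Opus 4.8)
The plan is to treat \eqref{stokeseq} as a perturbation of the constant-coefficient Stokes system, for which the Agmon--Douglis--Nirenberg / Cattabriga $L^r$ estimates are classical, and to read off the dependence on $\|\nabla\rho\|_{L^\infty}$ from the commutator terms created by the variable viscosity. First I would rewrite the principal part: since $\mathrm{div}\,u=0$, a direct computation gives $\mathrm{div}(\mu(\rho)(\nabla u+(\nabla u)^T))=\mu(\rho)\Delta u+(\nabla u+(\nabla u)^T)\nabla\mu(\rho)$, so that \eqref{stokeseq} is equivalent to
\[
-\mu(\rho)\Delta u+\nabla P=F+(\nabla u+(\nabla u)^T)\nabla\mu(\rho)=:G .
\]
The chain rule together with $\mu\in C^1$ and $\rho\in W^{2,q}$ yields $\mu(\rho)\in W^{1,q}$ with $\|\nabla\mu(\rho)\|_{L^\infty}=\|\mu'(\rho)\nabla\rho\|_{L^\infty}\le C(\bar\rho)\|\nabla\rho\|_{L^\infty}$; and since $q>3$, the embedding $W^{1,q}(\Omega)\hookrightarrow C^{0,\alpha}(\bar\Omega)$ with $\alpha=1-3/q$ shows that $\mu(\rho)$ is uniformly continuous with a modulus controlled by $\|\nabla\rho\|_{L^q}$. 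The scalar coefficient $\mu(\rho)$ in front of $\Delta u$ stays between $\underline\mu$ and $\bar\mu$, which is what will let me invert the frozen operator.

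For part (1), I would first establish the base energy bound $\|u\|_{H^1}\le C\|F\|_{L^2}$ by testing the weak formulation with $u$ and invoking Korn's and Poincaré's inequalities together with $\mu(\rho)\ge\underline\mu$. Then I would run a freezing-coefficient argument: cover $\bar\Omega$ by finitely many balls on which $\mathrm{osc}\,\mu(\rho)\le\underline\mu/2$, the radius (hence the number of balls) being dictated by the modulus of continuity above, localize $u$ with a partition of unity, and on each patch replace $\mu(\rho)$ by its value $\mu_i$ at the center. Each localized problem then becomes a constant-coefficient Stokes system with right-hand side $G+(\mu_i-\mu(\rho))\Delta u$; the standard $L^2$ Stokes estimate applies, the freezing error $(\mu_i-\mu(\rho))\Delta u$ carries a coefficient $\le\underline\mu/2$ and is absorbed into the left-hand side, and interior and boundary patches are handled by the usual flattening. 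Summing over the cover gives $\|u\|_{H^2}+\|P\|_{H^1}\le C\|G\|_{L^2}$, and the decisive point is that $\|G\|_{L^2}\le\|F\|_{L^2}+\|\nabla\mu(\rho)\|_{L^\infty}\|\nabla u\|_{L^2}\le C(1+\|\nabla\rho\|_{L^\infty})\|F\|_{L^2}$, which is exactly where the factor $(1+\|\nabla\rho\|_{L^\infty})$ in \eqref{stokesesti2} originates.

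Part (2) would follow the same freezing scheme but with the $L^r$ Cattabriga estimate, giving $\|u\|_{W^{2,r}}+\|P\|_{W^{1,r}}\le C\|G\|_{L^r}$. Here $\|G\|_{L^r}\le\|F\|_{L^r}+C\|\nabla\rho\|_{L^\infty}\|\nabla u\|_{L^r}$, and I would control $\|\nabla u\|_{L^r}$ by interpolating between the already-established $H^2$ bound (via $H^2\hookrightarrow W^{1,6}$ in three dimensions) and $\|\nabla u\|_{L^2}$ through the Gagliardo--Nirenberg inequality of Lemma~\ref{GNinequality}, then absorbing the top-order contribution with Young's inequality; the lower-order remainder is estimated by $\|F\|_{L^2}\le C\|F\|_{L^r}$ on the bounded domain $\Omega$.

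The main obstacle, I expect, is precisely the bookkeeping of the $\|\nabla\rho\|_{L^\infty}$-dependence while absorbing the first-order commutator term $(\nabla u+(\nabla u)^T)\nabla\mu(\rho)$: one must choose the interpolation and absorption parameters so that the top-order term is swallowed by the left-hand side without degrading the advertised dependence on $\|\nabla\rho\|_{L^\infty}$, and one must check that the number of freezing balls, which grows with the modulus of continuity of $\mu(\rho)$, does not contaminate the final constant. The boundary patches, where the pressure couples non-locally and Korn's inequality must be applied carefully after flattening the boundary, are the other technically delicate ingredient.
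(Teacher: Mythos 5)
The paper does not prove this lemma at all: it is quoted verbatim from Wang \cite{W}, Lemma~2.1 (itself in the tradition of Cho--Kim and Huang--Wang), so there is no in-paper argument to compare against line by line. Your strategy is the standard one behind that reference: the identity $\mathrm{div}(\mu(\rho)(\nabla u+(\nabla u)^T))=\mu(\rho)\Delta u+(\nabla u+(\nabla u)^T)\nabla\mu(\rho)$ (correct, using $\mathrm{div}\,u=0$), the energy bound $\|\nabla u\|_{L^2}\le C\|F\|_{L^2}$, and classical Stokes regularity applied to the rewritten system. The usual route in the literature avoids your freezing-coefficient cover: one divides by the scalar $\mu(\rho)\ge\underline{\mu}$ and applies Cattabriga/ADN to the pair $(u,P/\mu(\rho))$, at the cost of an extra lower-order term $(P/\mu)\nabla\mu(\rho)$ on the right which is then interpolated and absorbed; your localization is a valid but heavier substitute, and you should also track the cutoff commutators acting on $P$ (this needs an $L^2$ bound on $P$ from the weak formulation, which you do not mention).

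The one substantive issue is the dependence on $\|\nabla\rho\|_{L^\infty}$ in part (2), which you flag but do not resolve. Your own scheme gives $\|u\|_{W^{2,r}}+\|P\|_{W^{1,r}}\le C\|F\|_{L^r}+C\|\nabla\rho\|_{L^\infty}\|\nabla u\|_{L^r}$, and every way you propose to close this loses linearity: using the part-(1) bound via $H^2\hookrightarrow W^{1,6}$ gives $\|\nabla u\|_{L^r}\le C(1+\|\nabla\rho\|_{L^\infty})\|F\|_{L^2}$ and hence a quadratic constant (and fails outright for $r>6$); interpolating $\|\nabla u\|_{L^r}\le C\|u\|_{W^{2,r}}^{\theta}\|\nabla u\|_{L^2}^{1-\theta}$ and absorbing by Young produces the power $(1+\|\nabla\rho\|_{L^\infty})^{1/(1-\theta)}$ with $1/(1-\theta)>1$. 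So as written your argument proves \eqref{stokesestir} only with some $r$-dependent power of $(1+\|\nabla\rho\|_{L^\infty})$ strictly greater than one, not the stated linear dependence. This is harmless for the present paper, since Lemma~\ref{stokesesti} is only ever invoked under \eqref{keyassume}, where $\|\nabla\rho\|_{L^\infty}\le C\|\nabla\rho\|_{W^{1,q}}$ is a priori bounded and only finiteness of the constant matters; but if you want the lemma exactly as stated you must either produce a sharper absorption argument or defer to the precise statement and proof in \cite{W}.
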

The proof of Lemma \ref{stokesesti} has been given by Wang {\cite{W}}. And refer to Lemma 2.1 in his paper.

\section{Proof of Theorem 1.3}

Let $(\rho,u,P)$  be a strong solution to the initial and boundary value problem \eqref{NSK}-\eqref{bdy} as derived in Theorem \ref{local}. Then it follows from the standard energy estimate that
\begin{lemma} \label{lem1}
For any $T>0$, it holds that for any $ p \in [1, \infty]$, 
\begin{equation} \label{basicenergy}
\begin{aligned}
\sup_{0\le t\le T}(\|\rho\|_{L^p} + \|\sqrt{\rho}u\|_{L^2}^2 + \|\sqrt{\kappa(\rho)}\nabla\rho\|_{L^2}^2) +  \int_0^T \int |\nabla u|^2 dxds \le C.
\end{aligned}
\end{equation}
\end{lemma}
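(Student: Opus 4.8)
The plan is to establish the two assertions of \eqref{basicenergy} separately: the $L^p$ control of $\rho$ comes from the transport structure of the continuity equation, while the bounds on $\sqrt{\rho}\,u$, $\sqrt{\kappa(\rho)}\nabla\rho$ and $\int_0^T\int|\nabla u|^2$ come from the natural energy identity obtained by testing the momentum equation with $u$. For the density, I would first observe that since $\mathrm{div}\,u=0$, the continuity equation reduces to the pure transport equation $\partial_t\rho+u\cdot\nabla\rho=0$. Multiplying by $p\rho^{p-1}$ (legitimate since $\rho\ge 0$) and using $\mathrm{div}\,u=0$ gives $\partial_t\rho^p+\mathrm{div}(\rho^p u)=0$; integrating over $\Omega$ and using $u=0$ on $\partial\Omega$ shows $\frac{d}{dt}\int\rho^p\,dx=0$, so $\|\rho(t)\|_{L^p}=\|\rho_0\|_{L^p}$ for every $p\in[1,\infty)$, while the maximum principle gives $0\le\rho\le\|\rho_0\|_{L^\infty}=:\bar\rho$. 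The regularity in \eqref{solreg1} makes all these manipulations rigorous, and this already yields the $\|\rho\|_{L^p}$ part of \eqref{basicenergy}.

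For the energy estimate I would multiply the momentum equation by $u$ and integrate over $\Omega$. Using the continuity equation, the two convective terms combine into $\frac12\frac{d}{dt}\int\rho|u|^2$. The pressure term drops out, since $\int\nabla P\cdot u=-\int P\,\mathrm{div}\,u=0$. The viscous term, after integration by parts (the boundary contribution vanishing because $u=0$ on $\partial\Omega$) and using that $d$ is symmetric so that $d:\nabla u=|d|^2$, becomes $\int 2\mu(\rho)|d|^2\,dx$; combining the identity $\int|\nabla u|^2=2\int|d|^2$, valid for divergence-free fields vanishing on the boundary, with the lower bound \eqref{viscosity} gives the coercive estimate $\int 2\mu(\rho)|d|^2\,dx\ge\underline{\mu}\int|\nabla u|^2\,dx$.

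The \emph{delicate} term is the Korteweg/capillary contribution $\int\mathrm{div}(\kappa(\rho)\nabla\rho\otimes\nabla\rho)\cdot u$, which after integration by parts equals $-\int\kappa(\rho)(\nabla\rho\otimes\nabla\rho):\nabla u$. The crux is to recognize this as an exact time derivative. Differentiating the capillary energy and substituting the transport equation $\rho_t=-u\cdot\nabla\rho$, then integrating by parts and using $\mathrm{div}\,u=0$, I expect the two terms carrying the factor $\kappa'(\rho)$ to cancel precisely, leaving
$$\frac{d}{dt}\int\tfrac12\kappa(\rho)|\nabla\rho|^2\,dx=-\int\kappa(\rho)(\nabla\rho\otimes\nabla\rho):\nabla u\,dx.$$
Consequently the capillary term in the energy balance is exactly $\frac{d}{dt}\int\frac12\kappa(\rho)|\nabla\rho|^2$, and the whole identity collapses to
$$\frac{d}{dt}\Big(\tfrac12\int\rho|u|^2\,dx+\tfrac12\int\kappa(\rho)|\nabla\rho|^2\,dx\Big)+\int 2\mu(\rho)|d|^2\,dx=0.$$

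Finally I would integrate this identity in time over $[0,t]$, bound the resulting right-hand side by the initial kinetic and capillary energies $\|\sqrt{\rho_0}\,u_0\|_{L^2}^2+\|\sqrt{\kappa(\rho_0)}\nabla\rho_0\|_{L^2}^2$ (which are finite thanks to \eqref{initialreg} and the continuity of $\kappa$), and invoke the coercivity of the viscous term. This produces the uniform bounds on $\|\sqrt{\rho}\,u\|_{L^2}^2$, $\|\sqrt{\kappa(\rho)}\nabla\rho\|_{L^2}^2$ and $\int_0^T\int|\nabla u|^2\,dx\,ds$, completing \eqref{basicenergy}. The main obstacle is verifying the capillary cancellation identity displayed above: the $\kappa'(\rho)$ terms must be tracked carefully and their vanishing hinges crucially on incompressibility; everything else is the standard energy method for nonhomogeneous fluids.
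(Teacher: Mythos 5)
Your proof is correct and is precisely the ``standard energy estimate'' that the paper invokes for Lemma \ref{lem1} without writing out any details: conservation of $\|\rho\|_{L^p}$ by transport along a divergence-free field, the kinetic-energy balance from testing the momentum equation with $u$, and the key cancellation of the $\kappa'(\rho)$ terms (via $\mathrm{div}\,u=0$) that turns the capillary term into the exact derivative $\frac{d}{dt}\int\frac12\kappa(\rho)|\nabla\rho|^2\,dx$. Since the paper supplies no proof, there is nothing to contrast; your argument, including the identity $\int|\nabla u|^2=2\int|d|^2$ and the coercivity from $\mu\ge\underline{\mu}$, fills the gap correctly.
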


As mentioned in the Section 1, the main theorem will be proved by using a contradiction argument. Denote $ 0< T^* <\infty$ the maximal existence time for the strong solution to the initial and boundary value problem\eqref{NSK}-\eqref{bdy}. Suppose that \eqref{blowcriterion} were false, that is 
\begin{equation} \label{keyassume}
M_0 := \lim_{T \rightarrow T^*}(\|\nabla \rho\|_{L^{\infty}(0,T; W^{1,q})} + \| u\|_{L^s(0,T; L^r_{\omega})}) < \infty.
\end{equation}
Under the condition \eqref{keyassume}, one will extend the existence time of the strong solutions to \eqref{NSK}-\eqref{bdy} beyond $T^*$, which contradicts the definition of maximum of $T^*$.

The first key step is to derive the $L^2$-norm of the first order spatial derivatives of $u$ under the assumption of initial data and \eqref{keyassume}. Here we define the material derivative $\dot{u}:= u_t + u\cdot\nabla u$.

\begin{lemma} \label{lem2}
Under the condition \eqref{keyassume}, it holds that for any $0 < T < T^*$,
\begin{equation} \label{gradvelo}
\begin{aligned}
\sup_{ 0\le t \le T} \|\nabla u\|_{L^2}^2 + \int_0^T \|\sqrt{\rho} \dot{u}\|_{L^2}^2 dt  \le C.
\end{aligned}
\end{equation}
\end{lemma}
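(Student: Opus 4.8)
The plan is to test the momentum equation, rewritten in the form $\rho\dot u = \mathrm{div}(2\mu(\rho)d) - \nabla P - \mathrm{div}(\kappa(\rho)\nabla\rho\otimes\nabla\rho)$, against the material derivative $\dot u = u_t + u\cdot\nabla u$ and integrate over $\Omega$. After integrating by parts in the viscous and capillary terms and splitting $\nabla\dot u = \nabla u_t + \nabla(u\cdot\nabla u)$, the $u_t$-contributions assemble into exact time derivatives; using $\rho_t = -u\cdot\nabla\rho$ (from the continuity equation together with $\mathrm{div}\,u=0$) one is led to
$$\frac{d}{dt}\Big(\int\mu(\rho)|d|^2\,dx - \int\kappa(\rho)\nabla\rho\otimes\nabla\rho:\nabla u\,dx\Big) + \int\rho|\dot u|^2\,dx = \sum_i I_i,$$
where the $I_i$ collect the remaining lower-order terms. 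Since $u\in H^1_0$ and $\mathrm{div}\,u=0$, I would use the identity $\int|\nabla u|^2 = 2\int|d|^2$ together with $\mu\ge\underline{\mu}$ so that $\int\mu(\rho)|d|^2\gtrsim\|\nabla u\|_{L^2}^2$, while the non-coercive capillary piece is dominated by $\delta\|\nabla u\|_{L^2}^2 + C(M_0)$ using $\|\nabla\rho\|_{L^\infty}\le C(M_0)$ (valid because $W^{1,q}\hookrightarrow L^\infty$ for $q>3$). Thus the bracketed quantity is comparable to $\|\nabla u\|_{L^2}^2$ up to a constant depending on $M_0$.

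The decisive auxiliary step is to control the full $H^2$-norm of $u$ by $\|\sqrt\rho\dot u\|_{L^2}$. Viewing the momentum equation as the stationary Stokes system $-\mathrm{div}(2\mu(\rho)d)+\nabla P = F$ with $F = -\rho\dot u - \mathrm{div}(\kappa(\rho)\nabla\rho\otimes\nabla\rho)$, Lemma \ref{stokesesti}(1) gives $\|u\|_{H^2}+\|P\|_{H^1}\le C(1+\|\nabla\rho\|_{L^\infty})\|F\|_{L^2}$. Since $0\le\rho\le\bar\rho$ yields $\|\rho\dot u\|_{L^2}\le\bar\rho^{1/2}\|\sqrt\rho\dot u\|_{L^2}$, and since $\mathrm{div}(\kappa\nabla\rho\otimes\nabla\rho)$ is a combination of $|\nabla\rho|^3$ and $|\nabla\rho||\nabla^2\rho|$ controlled in $L^2$ by $\|\nabla\rho\|_{L^\infty}$ and $\|\nabla^2\rho\|_{L^q}\le C(M_0)$, I obtain $\|u\|_{H^2}+\|P\|_{H^1}\le C(M_0)(\|\sqrt\rho\dot u\|_{L^2}+1)$. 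This is the mechanism by which every occurrence of $\|\nabla u\|_{H^1}$ or $\|\nabla^2 u\|_{L^2}$ on the right can be traded, with a small constant, for the coercive term $\|\sqrt\rho\dot u\|_{L^2}^2$ on the left.

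For the purely velocity terms — typically $\int\mu'(\rho)(u\cdot\nabla\rho)|d|^2$, $\int\mu(\rho)d:\nabla(u\cdot\nabla u)$, and the pressure term $\int P\,\partial_j u_i\partial_i u_j$ — I would apply Lemma \ref{weaksobolev} with $f=u\in L^r_\omega$ and $g=\nabla u$ (or $d$), together with the Gagliardo--Nirenberg inequalities of Lemma \ref{GNinequality} and the Poincaré inequality $\|u\|_{L^2}\le C\|\nabla u\|_{L^2}$. Lemma \ref{weaksobolev} produces exactly the splitting $\epsilon\|\nabla u\|_{H^1}^2 + C(\|u\|_{L^r_\omega}^s+1)\|\nabla u\|_{L^2}^2$, whose first summand is absorbed via the bootstrap and whose second summand carries the time-integrable weight $h(t):=C(\|u\|_{L^r_\omega}^s+1)\in L^1(0,T^*)$ guaranteed by \eqref{keyassume}. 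The capillary contributions $\int\partial_t(\kappa\nabla\rho\otimes\nabla\rho):\nabla u$ and $\int\kappa\nabla\rho\otimes\nabla\rho:\nabla(u\cdot\nabla u)$ are where the capillary effect makes the analysis delicate: substituting $\rho_t=-u\cdot\nabla\rho$ and $\nabla\rho_t=-\nabla(u\cdot\nabla\rho)$, they reduce to integrals of the schematic forms $\int|u||\nabla\rho|^2|\nabla u|$, $\int|\nabla u|^2|\nabla\rho|^2$, and, most critically, $\int|u||\nabla^2\rho||\nabla u|$. The first two are bounded by $C(M_0)\|\nabla u\|_{L^2}^2$ using $\nabla\rho\in L^\infty$; the last is the genuine obstacle, and here I would place $\nabla^2\rho$ in $L^q$ ($q>3$, hence $q'<3/2$), use the bounded-domain embedding $L^2\hookrightarrow L^{q'}$ to bound it by $C(M_0)\|u\nabla u\|_{L^2}$, and then invoke Lemma \ref{weaksobolev} once more to dominate $\|u\nabla u\|_{L^2}$. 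This is precisely the interplay between $u\in L^r_\omega=L^{r,\infty}$ and the $W^{1,q}$-regularity of $\nabla\rho$ flagged in the introduction.

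Collecting all bounds and choosing $\epsilon,\delta$ small enough to absorb the $\|\sqrt\rho\dot u\|_{L^2}^2$ and $\|\nabla u\|_{L^2}^2$ contributions into the left-hand side, I arrive at a differential inequality
$$\frac{d}{dt}E(t) + \tfrac12\int\rho|\dot u|^2\,dx \le h(t)\big(E(t)+1\big),\qquad E(t)\sim\|\nabla u\|_{L^2}^2,$$
with $h\in L^1(0,T^*)$. Since the compatibility condition \eqref{initialcompa} and the regularity \eqref{initialreg} of the initial data ensure $E(0)<\infty$, Grönwall's inequality (with $\int_0^{T^*}h\,dt<\infty$, finite because $T^*<\infty$ and $\int_0^{T^*}\|u\|_{L^r_\omega}^s\,dt<\infty$) yields $\sup_{[0,T]}\|\nabla u\|_{L^2}^2\le C$ and $\int_0^T\|\sqrt\rho\dot u\|_{L^2}^2\,dt\le C$ for every $T<T^*$, which is \eqref{gradvelo}. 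I expect the principal difficulty to be the critical capillary term $\int|u||\nabla^2\rho||\nabla u|$ above, as it is the one place where the weak-$L^r$ control of $u$ must be combined with second derivatives of the density, forcing the Lorentz-space interpolation.
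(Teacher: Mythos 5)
Your proposal is correct and, at its core, identical to the paper's argument: the same energy functional $\int\mu(\rho)|d|^2\,dx-\int\kappa(\rho)\nabla\rho\otimes\nabla\rho:\nabla u\,dx$ (with the capillary part absorbed by $\tfrac{\underline{\mu}}{4}\|\nabla u\|_{L^2}^2+C$ exactly as you describe), the same use of the Stokes regularity of Lemma \ref{stokesesti} to convert $\|\nabla u\|_{H^1}$ into $C_*\|\sqrt{\rho}\dot u\|_{L^2}+C$, the same application of Lemma \ref{weaksobolev} with $f=u$, $g=\nabla u$ to generate the integrable weight $h(t)=C(\|u\|_{L^r_\omega}^s+1)$, and the same Gronwall closure. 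The one genuine difference is the choice of test function: the paper multiplies by $u_t$ rather than $\dot u$, so the pressure term vanishes outright ($\mathrm{div}\,u_t=0$) and the only convective remainder is the single term $\int\rho\dot u\cdot(u\cdot\nabla u)\,dx$, handled by Cauchy--Schwarz as $\epsilon\|\sqrt{\rho}\dot u\|_{L^2}^2+C(\epsilon)\|u\cdot\nabla u\|_{L^2}^2$. Your version instead produces $\int 2\mu(\rho)d:\nabla(u\cdot\nabla u)\,dx$ and $\int P\,\partial_ju_i\partial_iu_j\,dx$; these are still controllable (the pressure term should be integrated back by parts to $-\int\nabla P\cdot(u\cdot\nabla u)\,dx$ and then bounded via $\|\nabla P\|_{L^2}\le C_*\|\sqrt{\rho}\dot u\|_{L^2}+C$ together with Lemma \ref{weaksobolev}, since a direct bound through $\|P\|_{L^2}\|\nabla u\|_{L^4}^2$ would be supercritical and not absorbable), so your route works but at the cost of extra bookkeeping that the paper's choice of $u_t$ avoids. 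Two minor remarks: the term $\int|u||\nabla^2\rho||\nabla u|\,dx$ that you single out as critical is treated in the paper by the simpler Hölder pairing $\|\nabla^2\rho\|_{L^2}\,\||u||\nabla u|\|_{L^2}$ (equivalent to your $L^q$--$L^{q'}$ pairing on a bounded domain), and the finiteness of $E(0)$ needs only $u_0\in H^1_0$, not the compatibility condition \eqref{initialcompa}, which enters first in Lemma \ref{lem3}.
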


\begin{proof}
Multiplying the momentum equations $\eqref{NSK}_2$ by $u_t$, and integrating the resulting equations over $\Omega$, we have
\begin{equation} \label{lem2_1}
\begin{aligned}
&\int \rho |\dot{u}|^2 dx + \frac{d}{dt} \int \mu(\rho)|d|^2 dx \\
=& \int \rho \dot{u} \cdot(u\cdot\nabla u) dx - \int \mu'(\rho)u\cdot\nabla \rho|d|^2 dx + \int \kappa(\rho) \nabla \rho \otimes\nabla \rho : \nabla u_t dx \\
 =&\frac{d}{dt}\int \kappa(\rho) \nabla \rho \otimes\nabla \rho : \nabla u dx+\int \kappa'(\rho)(u\cdot\nabla \rho) \nabla \rho \otimes\nabla \rho : \nabla u dx \\
 & + \int \kappa(\rho) \nabla (u\cdot \nabla \rho) \otimes\nabla \rho : \nabla u dx + \int \rho \dot{u} \cdot(u\cdot\nabla u) dx - \int \mu'(\rho)u\cdot\nabla \rho|d|^2 dx \\
 =& \frac{d}{dt}\int \kappa(\rho) \nabla \rho \otimes\nabla \rho : \nabla u dx+ \sum_{k=1}^4 I_k. \\
\end{aligned}
\end{equation}
To complete the proof, we should bound the terms $I_1$ to $I_4$. First, for $I_1$, we use the assumption \eqref{keyassume} and apply H\"{o}lder inequality,
\begin{equation} \label{lem2_2}
\begin{aligned}
I_1 & = \int \kappa'(\rho)(u\cdot\nabla \rho) \nabla \rho \otimes\nabla \rho : \nabla u dx\\
& \le \|\kappa'(\rho)\|_{L^{\infty}}\|\nabla \rho\|_{L^6}^3 \|u\cdot\nabla u\|_{L^2} \\
& \le \|u\cdot\nabla u\|_{L^2}^2 +C.
\end{aligned}
\end{equation}
For $I_2$, we devide it into two parts, and simply use H\"{o}lder inequality to get
\begin{equation} \label{lem2_3}
\begin{aligned}
I_2 & = \int \kappa(\rho) \nabla (u\cdot\nabla \rho) \otimes\nabla \rho : \nabla u dx\\
& \le \|\kappa(\rho)\|_{L^{\infty}}\|\nabla \rho\|_{L^{\infty}}\|\nabla^2 \rho\|_{L^2} \|u\cdot\nabla u\|_{L^2} + \|\kappa(\rho)\|_{L^{\infty}}\|\nabla \rho\|_{L^{\infty}}^2 \|\nabla u\|_{L^2}^2\\
& \le C \|u\cdot\nabla u\|_{L^2}^2 +C(1+\|\nabla u\|_{L^2}^2).
\end{aligned}
\end{equation}
For $I_3$, using Cauchy-Schwarz inequality with $\epsilon$ to get
\begin{equation} \label{lem2_4}
\begin{aligned}
I_3 & =\int \rho \dot{u} \cdot(u\cdot\nabla u) dx \\
& \le \epsilon \|\sqrt{\rho}\dot{u}\|_{L^2}^2 + C(\epsilon) \|u\cdot\nabla u\|_{L^2}^2,\\
\end{aligned}
\end{equation}
and finally remark that $d = \frac{1}{2} (\nabla u + (\nabla u)^T)$, one has
\begin{equation} \label{lem2_5}
\begin{aligned}
I_4 & =\int \mu'(\rho)u\cdot\nabla \rho|d|^2 dx  \\
& \le \|\mu'(\rho)\|_{L^{\infty}}\|\nabla \rho\|_{L^{\infty}}\|\nabla u\|_{L^2}\|u\cdot\nabla u\|_{L^2}\\
& \le C\|\nabla u\|_{L^2}^2 + C\|u\cdot\nabla u\|_{L^2}^2.\\
\end{aligned}
\end{equation}
To obtain the second order spatial derivatives of the velocity $u$, we make good use of the Stokes type estimates on the momentum equations $\eqref{NSK}_2$ by simply put $F=-\rho \dot{u} - \mathrm{div}(\kappa(\rho)\nabla \rho \otimes \nabla \rho)$. Then applying Lemma \ref{stokesesti}, we derive that
\begin{equation} \label{lem2_6}
\begin{aligned}
\|\nabla u\|_{H^1} + \|P\|_{H^1} & \le C(1+ \|\nabla \rho\|_{L^{\infty}}) \|F\|_{L^2} \\
& \le C(1+ \|\nabla \rho\|_{L^{\infty}}) \|\rho \dot{u} + \mathrm{div}(\kappa(\rho)\nabla \rho \otimes \nabla \rho)\|_{L^2} \\
& \le C_*\|\sqrt{\rho}\dot{u}\|_{L^2} + C\|\nabla \rho\|_{L^6}^3 + C\|\nabla \rho\|_{L^{\infty}}\|\nabla^2 \rho\|_{L^2}\\
&  \le C_*\|\sqrt{\rho}\dot{u}\|_{L^2} + C,\\
\end{aligned}
\end{equation}
where $C_*$ is a positive number.

Now we substitute \eqref{lem2_2}-\eqref{lem2_5} into \eqref{lem2_1}, deduces that
\begin{equation} \label{lem2_7}
\begin{aligned}
&\int \rho |\dot{u}|^2 dx + \frac{d}{dt} \int \mu(\rho)|d|^2 dx \\
 \le & \frac{d}{dt}\int \kappa(\rho) \nabla \rho \otimes\nabla \rho : \nabla u dx+ \epsilon \|\sqrt{\rho}\dot{u}\|_{L^2}^2 + C(1+\|\nabla u\|_{L^2}^2)\\
 &+  C(\epsilon) \|u\cdot\nabla u\|_{L^2}^2\\
 \le & \frac{d}{dt}\int \kappa(\rho) \nabla \rho \otimes\nabla \rho : \nabla u dx+ \epsilon \|\sqrt{\rho}\dot{u}\|_{L^2}^2 + C(1+\|\nabla u\|_{L^2}^2) \\
 &+  \delta \|\nabla u\|_{H^1}^2 + C(\epsilon, \delta)(\|u\|_{L^r_{\omega}}^s+1)\|\nabla u\|_{L^2}^2\\
 \le & \frac{d}{dt}\int \kappa(\rho) \nabla \rho \otimes\nabla \rho : \nabla u dx+ \epsilon \|\sqrt{\rho}\dot{u}\|_{L^2}^2 + C(1+\|\nabla u\|_{L^2}^2) \\
 &+ C_* \delta \|\sqrt{\rho}\dot{u}\|_{L^2}^2 + C(\epsilon, \delta)(\|u\|_{L^r_{\omega}}^s+1)\|\nabla u\|_{L^2}^2,\\
\end{aligned}
\end{equation}
where we use Lemma \ref{weaksobolev} in the second inequality, and \eqref{lem2_6} is used to get the third one.
Then choosing $\epsilon, \delta$ small enough, we get
\begin{equation} \label{lem2_8}
\begin{aligned}
&\int \rho |\dot{u}|^2 dx + \frac{d}{dt} \int \mu(\rho)|d|^2 dx \\
 \le & \frac{d}{dt}\int \kappa(\rho) \nabla \rho \otimes\nabla \rho : \nabla u dx+ C(1+\|\nabla u\|_{L^2}^2)(\|u\|_{L^r_{\omega}}^s+1).\\
\end{aligned}
\end{equation}
By the assumption \eqref{keyassume} and Cauchy-Schwarz inequality, it is easily seen that
\begin{equation} \label{lem2_9}
\begin{aligned}
 C\int |\kappa(\rho)| |\nabla \rho \otimes\nabla \rho : \nabla u|dx \le  \frac{\underline{\mu}}{4} \|\nabla u\|_{L^2}^2 +C.
\end{aligned}
\end{equation}
Taking this into account, we can conclude from \eqref{lem2_8} and the Gronwall inequality that \eqref{gradvelo} holds for all $0 \le T < T^*$. Therefore we complete the proof of Lemma \ref{lem2}.
\end{proof}

To continue our proof, we will derive the estimate of $\sqrt{\rho}u_t$ by using the compatibility condition \eqref{initialcompa} on the initial data. More precisely, we have the following lemma.
\begin{lemma} \label{lem3}
Under the condition \eqref{keyassume}, it holds that for any $0 < T < T^*$,
\begin{equation} \label{rhou_t}
\begin{aligned}
\sup_{ 0\le t \le T} \|\sqrt{\rho} u_t \|_{L^2}^2 + \int_0^T \|\nabla u_t\|_{L^2}^2 dt  \le C.
\end{aligned}
\end{equation}
\end{lemma}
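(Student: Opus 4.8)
The plan is to differentiate the momentum equation in time, test against $u_t$, and control the resulting terms using Lemma~\ref{lem2}, the Stokes estimate Lemma~\ref{stokesesti}, and the weak-$L^r$ embedding Lemma~\ref{weaksobolev}. Concretely, I would apply $\partial_t$ to $\eqref{NSK}_2$, multiply by $u_t$, and integrate over $\Omega$. Writing $\partial_t(\rho u) = \rho_t u + \rho u_t$ and using the mass equation $\rho_t = -\mathrm{div}(\rho u) = -u\cdot\nabla\rho$ (since $\mathrm{div}\,u=0$), the leading term produces $\frac{1}{2}\frac{d}{dt}\int \rho|u_t|^2\,dx$ together with lower-order convective pieces. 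The viscous term, after integration by parts, yields the good dissipation $\int \mu(\rho)|\nabla u_t|^2\,dx$ (up to the symmetric-gradient constant and the uniform lower bound $\underline{\mu}$ from \eqref{viscosity}), while the time derivative hitting $\mu(\rho)$ gives a term with $\mu'(\rho)\rho_t$ that must be absorbed. The capillary term $\mathrm{div}(\kappa(\rho)\nabla\rho\otimes\nabla\rho)$, after differentiation, produces expressions involving $\kappa'(\rho)\rho_t$ and $\nabla\rho_t = -\nabla(u\cdot\nabla\rho)$, which I expect to control using the uniform bound $\|\nabla\rho\|_{W^{1,q}}\le C$ from \eqref{keyassume}.

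The key steps, in order, are as follows. First, establish the differentiated energy identity, isolating $\frac12\frac{d}{dt}\int\rho|u_t|^2$ and the dissipation $\underline{\mu}\int|\nabla u_t|^2$ on the left. Second, estimate each right-hand term. The convective contributions coming from $\rho_t$ and from the time derivative of $\rho u\otimes u$ involve products such as $\rho u_t\cdot(u\cdot\nabla u_t)$ and $\rho u\cdot\nabla u\cdot\nabla u_t$; these I would bound by H\"older and Gagliardo-Nirenberg \eqref{GNin1}, then split off a small multiple of $\|\nabla u_t\|_{L^2}^2$ to be absorbed, with the remainder controlled by $\|\sqrt\rho\dot u\|_{L^2}^2$ (integrable in time by Lemma~\ref{lem2}) and $\|\nabla u\|_{L^2}^2$ (bounded by Lemma~\ref{lem2}). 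The term carrying $u$ itself, namely $\int\rho u_t\cdot(u\cdot\nabla u_t)$ or its analogue, is where the weak-$L^r$ assumption enters: I would apply Lemma~\ref{weaksobolev} with $g=u_t$ (or a component thereof) and $f=u\in L^r_\omega$ to obtain $\epsilon\|u_t\|_{H^1}^2 + C(\epsilon)(\|u\|_{L^r_\omega}^s+1)\|u_t\|_{L^2}^2$, absorbing the gradient part and leaving a factor $(\|u\|_{L^r_\omega}^s+1)$ that is integrable over $[0,T]$ by \eqref{keyassume}.

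Third, handle the capillary and variable-viscosity terms. For these I would use $\|\nabla\rho\|_{L^\infty}\le C\|\nabla\rho\|_{W^{1,q}}$ (Sobolev embedding, $q>3$) and $\|\nabla^2\rho\|_{L^2}\le C$ from \eqref{keyassume}, together with $\rho_t=-u\cdot\nabla\rho$ and $\nabla\rho_t=-\nabla(u\cdot\nabla\rho)$, reducing everything to products of $\nabla u$, $u_t$, $\nabla u_t$ and the uniformly bounded density factors; again a small portion of $\|\nabla u_t\|_{L^2}^2$ is absorbed and the rest is controlled by already-bounded quantities. Fourth, collect all estimates into a differential inequality of the form
\begin{equation} \label{lem3plan}
\frac{d}{dt}\|\sqrt\rho u_t\|_{L^2}^2 + \underline{\mu}\|\nabla u_t\|_{L^2}^2 \le C\big(1+\|u\|_{L^r_\omega}^s\big)\|\sqrt\rho u_t\|_{L^2}^2 + C\|\sqrt\rho\dot u\|_{L^2}^2 + C,
\end{equation}
and apply Gronwall's inequality, using that $\int_0^T(1+\|u\|_{L^r_\omega}^s)\,dt\le C$ by \eqref{keyassume} and $\int_0^T\|\sqrt\rho\dot u\|_{L^2}^2\,dt\le C$ by Lemma~\ref{lem2}. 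To start the Gronwall argument one needs $\|\sqrt\rho u_t\|_{L^2}$ finite at $t=0$, which is exactly where the compatibility condition \eqref{initialcompa} is used: it guarantees $\sqrt{\rho_0}u_t(0)=\sqrt{\rho_0}\,g\in L^2$.

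I expect the main obstacle to be the terms generated by the capillary stress after time differentiation, especially those containing $\nabla\rho_t=-\nabla(u\cdot\nabla\rho)$, which brings in second derivatives of $\rho$ multiplied by $u$ or $\nabla u$; controlling these without an $L^\infty$ bound on $u$ forces one to exploit $\|\nabla\rho\|_{W^{1,q}}\le C$ and to interpolate carefully, paralleling the earlier treatment of $I_1$--$I_4$ in Lemma~\ref{lem2}. The second delicate point is ensuring every borrowed copy of $\|\nabla u_t\|_{L^2}^2$ is genuinely absorbable, i.e.\ that the accumulated small constants stay below $\underline{\mu}$, and that the coefficient multiplying $\|\sqrt\rho u_t\|_{L^2}^2$ remains integrable in time so that Gronwall closes.
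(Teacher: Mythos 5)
Your overall strategy coincides with the paper's: differentiate $\eqref{NSK}_2$ in time, test with $u_t$, extract the dissipation $\int\mu(\rho)|d_t|^2$, control the capillary and variable-viscosity terms through $\|\nabla\rho\|_{W^{1,q}}\le C$, use the Stokes estimate of Lemma \ref{stokesesti} to bound $\|\nabla u\|_{H^1}$ by $\|\sqrt\rho u_t\|_{L^2}$, and close with Gronwall starting from the compatibility condition. However, there is one step in your plan that does not go through as written: for the convective term $\int\rho\, u\cdot\nabla u_t\cdot u_t\,dx$ you propose to invoke Lemma \ref{weaksobolev} with $g=u_t$ and $f=u$, which yields the remainder $C(\epsilon)(\|u\|_{L^r_\omega}^s+1)\|u_t\|_{L^2}^2$. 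In the vacuum setting the \emph{unweighted} norm $\|u_t\|_{L^2}$ is not dominated by $\|\sqrt\rho u_t\|_{L^2}$ (the density may vanish on an open set), so this term cannot be fed into a Gronwall inequality for $\|\sqrt\rho u_t\|_{L^2}^2$; nor can it be absorbed via Poincar\'e, since that would turn it into $(\|u\|_{L^r_\omega}^s+1)\|\nabla u_t\|_{L^2}^2$ with a coefficient that is only $L^1$ in time and not small. Your target inequality \eqref{lem3plan} has the weighted norm on the right, but the step you describe does not produce it.

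The repair is exactly what the paper does, and it shows that the weak-$L^r$ assumption is not needed anywhere in this lemma: since Lemma \ref{lem2} already gives $\sup_t\|\nabla u\|_{L^2}\le C$, one has $\|u\|_{L^6}\le C$, so the term is estimated by H\"older as $\|\rho\|_{L^\infty}^{1/2}\|\sqrt\rho u_t\|_{L^3}\|u\|_{L^6}\|\nabla u_t\|_{L^2}$, keeping the weight $\sqrt\rho$ attached to $u_t$, then interpolating $\|\sqrt\rho u_t\|_{L^3}\le\|\sqrt\rho u_t\|_{L^2}^{1/2}\|\sqrt\rho u_t\|_{L^6}^{1/2}\le C\|\sqrt\rho u_t\|_{L^2}^{1/2}\|\nabla u_t\|_{L^2}^{1/2}$ and applying Young's inequality; the remainder is $C\|\sqrt\rho u_t\|_{L^2}^2\|\nabla u\|_{L^2}^4\le C\|\sqrt\rho u_t\|_{L^2}^2$, which Gronwall handles. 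The same device (with $\|u\|_{L^6}\le C$ and $\|\nabla u\|_{H^1}\le C\|\sqrt\rho u_t\|_{L^2}+C$ from the Stokes estimate) disposes of all the remaining terms, including the capillary ones, precisely as you anticipate in your third step. With that single substitution your argument matches the paper's proof.
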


\begin{proof}
Differentiating the momentum equations $\eqref{NSK}_2$ with respect to $t$, along with the continuity equation $\eqref{NSK}_1$, we get
\begin{equation} \label{lem3_1}
\begin{aligned}
& \rho u_{tt} + \rho u\cdot\nabla u_t - \mathrm{div}(2\mu(\rho) d_t)  + \nabla P_t\\
= &  (u\cdot\nabla \rho)(u_t + u\cdot \nabla u) - \rho u_t\cdot \nabla u -  \mathrm{div}(2\mu'(\rho)(u\cdot\nabla \rho) d) \\
+ &  \mathrm{div}(\kappa'(\rho)(u\cdot\nabla \rho)\nabla \rho\otimes\nabla\rho) +2 \mathrm{div}(\kappa(\rho)\nabla (u\cdot\nabla \rho)\otimes\nabla\rho).\\
\end{aligned}
\end{equation}
Multiplying \eqref{lem3_1} by $u_t$ and integrating over $\Omega$, we get after integartion by parts that
\begin{equation} \label{lem3_2}
\begin{aligned}
&\frac{1}{2}\frac{d}{dt}\int \rho |u_t|^2 dx + 2 \int \mu(\rho)|d_t|^2 dx 
= \int -2 \rho u\cdot \nabla u_t\cdot u_t dx\\
+ &\int (u\cdot \nabla \rho)(u\cdot\nabla u)\cdot u_t dx - \int \rho u_t \cdot \nabla u\cdot u_t dx \\
+ & \int  2\mu'(\rho)(u\cdot\nabla \rho) d: \nabla u_t dx -  \int \kappa'(\rho)(u\cdot\nabla \rho)\nabla \rho\otimes\nabla\rho :\nabla u_t dx \\
 -& \int 2 \kappa(\rho)\nabla (u\cdot\nabla \rho)\otimes\nabla\rho:\nabla u_t dx =: \sum_{k=1}^6 J_k.\\
\end{aligned}
\end{equation}
To proceed, we estimate the terms from $J_1$ to $J_6$. First
\begin{equation} \label{lem3_3}
\begin{aligned}
J_1 & = \int -2 \rho u\cdot \nabla u_t\cdot u_t dx\\
& \le C\|\rho\|_{L^{\infty}}^{\frac{1}{2}}\|\sqrt{\rho}u_t\|_{L^3}\|u\|_{L^6}\|\nabla u_t\|_{L^2}\\
& \le C\|\sqrt{\rho}u_t\|_{L^{2}}^{\frac{1}{2}}\|\sqrt{\rho}u_t\|_{L^6}^{\frac{1}{2}}\|\nabla u\|_{L^2}\|\nabla u_t\|_{L^2}\\
& \le C\|\sqrt{\rho}u_t\|_{L^{2}}^{\frac{1}{2}}\|\nabla u\|_{L^2}\|\nabla u_t\|_{L^2}^{\frac{3}{2}}\\
& \le \frac{1}{12}\underline{\mu} \|\nabla u_t\|_{L^2}^{2} + C\|\sqrt{\rho}u_t\|_{L^{2}}^{2}\|\nabla u\|_{L^2}^{4}\\
& \le \frac{1}{12}\underline{\mu} \|\nabla u_t\|_{L^2}^{2} + C\|\sqrt{\rho}u_t\|_{L^{2}}^{2}.\\
\end{aligned}
\end{equation}
Similarly,
\begin{equation} \label{lem3_4}
\begin{aligned}
J_2 & = \int (u\cdot \nabla \rho)(u\cdot\nabla u)\cdot u_t dx \\
& \le C\|\nabla \rho\|_{L^{\infty}}\|\nabla u\|_{L^2}\|u\|_{L^6}^2\| u_t\|_{L^6}\\
& \le C\|\nabla \rho\|_{L^{\infty}}\|\nabla u\|_{L^2}^3 \|\nabla u_t\|_{L^2}\\
& \le \frac{1}{12}\underline{\mu}  \|\nabla u_t\|_{L^2}^{2} + C,\\
\end{aligned}
\end{equation}
\begin{equation} \label{lem3_5}
\begin{aligned}
J_3 & =  - \int \rho u_t \cdot \nabla u\cdot u_t dx \\
& \le C\| \rho\|_{L^{\infty}}^{\frac{1}{2}}\|u_t\|_{L^6}\|\sqrt{\rho}u_t\|_{L^3}\|\nabla u\|_{L^2}\\
& \le C\|\nabla u_t\|_{L^2}\|\sqrt{\rho}u_t\|_{L^2}^{\frac{1}{2}}\|\sqrt{\rho} u_t\|_{L^6}^{\frac{1}{2}}\\
& \le C\|\sqrt{\rho}u_t\|_{L^2}^{\frac{1}{2}}\|\nabla u_t\|_{L^2}^{\frac{3}{2}}\\
& \le \frac{1}{12}\underline{\mu} \|\nabla u_t\|_{L^2}^{2} + C\|\sqrt{\rho}u_t\|_{L^{2}}^{2},\\
\end{aligned}
\end{equation}
\begin{equation} \label{lem3_6}
\begin{aligned}
J_4 & = \int  2\mu'(\rho)(u\cdot\nabla \rho) d: \nabla u_t dx \\
& \le C\| \mu'(\rho)\|_{L^{\infty}}\|\nabla \rho\|_{L^{\infty}}\|u\|_{L^6}\|\nabla u\|_{L^3}\|\nabla u_t\|_{L^2}\\
& \le C\|\nabla u\|_{L^2}^{\frac{3}{2}}\|\nabla u\|_{H^1}^{\frac{1}{2}} \|\nabla u_t\|_{L^2}\\
& \le \frac{1}{12}\underline{\mu} \|\nabla u_t\|_{L^2}^{2} + C\|\nabla u\|_{H^1}^{2},\\
\end{aligned}
\end{equation}
\begin{equation} \label{lem3_7}
\begin{aligned}
J_5 & = \int \kappa'(\rho)(u\cdot\nabla \rho)\nabla \rho\otimes\nabla\rho :\nabla u_t dx \\
& \le C\| \kappa'(\rho)\|_{L^{\infty}}\|\nabla \rho\|_{L^{\infty}}^3\|u\|_{L^2}\|\nabla u_t\|_{L^2}\\
& \le\frac{1}{12}\underline{\mu} \|\nabla u_t\|_{L^2}^{2} + C,\\
\end{aligned}
\end{equation}
and finally remark that $ 3< q \le 6$, by the assumption \eqref{keyassume}, one has
\begin{equation} \label{lem3_8}
\begin{aligned}
J_6 & =\int 2 \kappa(\rho)\nabla (u\cdot\nabla \rho)\otimes\nabla\rho:\nabla u_t dx  \\
& \le C\| \kappa(\rho)\|_{L^{\infty}}\|\nabla \rho\|_{L^{\infty}}^2\|\nabla u\|_{L^2}\|\nabla u_t\|_{L^2}\\
& \quad + C\| \kappa(\rho)\|_{L^{\infty}}\|\nabla \rho\|_{L^{\infty}}\|\nabla^2 \rho\|_{L^3}\|u\|_{L^6}\|\nabla u_t\|_{L^2} \\
& \le \frac{1}{12}\underline{\mu}  \|\nabla u_t\|_{L^2}^{2} + C.\\
\end{aligned}
\end{equation}
It remains to estimate $\|\nabla u\|_{H^1}$ since it appears in the estimate of term $J_4$, see \eqref{lem3_6}.  Indeed, we can duduce from Lemma \ref{stokesesti} that
\begin{equation} \label{lem3_9}
\begin{aligned}
\|\nabla u\|_{H^1} +\|P\|_{H^1}& \le C(1+ \|\nabla \rho\|_{L^{\infty}}) \|F\|_{L^2} \\
& \le C(1+ \|\nabla \rho\|_{L^{\infty}}) \|\rho u_t + \rho u\cdot \nabla u + \mathrm{div}(\kappa(\rho)\nabla \rho \otimes \nabla \rho)\|_{L^2} \\
& \le C(\|\sqrt{\rho} u_t\|_{L^2} + \|u\|_{L^6}\|\nabla u\|_{L^3} + \|\nabla \rho\|_{L^6}^3 + \|\nabla \rho\|_{L^{\infty}}\|\nabla^2 \rho\|_{L^2})\\
&  \le C\|\sqrt{\rho}u_t\|_{L^2} + \frac{1}{2} \|\nabla u\|_{H^1} + C,\\
\end{aligned}
\end{equation}
which implies
\begin{equation} \label{lem3_10}
\begin{aligned}
\|\nabla u\|_{H^1}  \le C\|\sqrt{\rho}u_t\|_{L^2} +  C.\\
\end{aligned}
\end{equation}
Combining all the estimates \eqref{lem3_3}-\eqref{lem3_8} and \eqref{lem3_10}, we deduce that
\begin{equation} \label{lem3_11}
\begin{aligned}
&\frac{1}{2}\frac{d}{dt}\int \rho |u_t|^2 dx + 2 \int \mu(\rho)|d_t|^2 dx  \\
\le &  \frac{1}{2}\underline{\mu}  \|\nabla u_t\|_{L^2}^{2}  + C (1+\|\sqrt{\rho}u_t\|_{L^2}^2),  \\
\end{aligned}
\end{equation}
together with the fact that
$$ 2 \int |d_t|^2 dx = \int |\nabla u_t|^2 dx, $$
we obtain \eqref{rhou_t} by applying the Gronwall inequality. Therefore the proof of Lemma \ref{lem3} is completed.
\end{proof}

\begin{lemma} \label{lem4}
Under the condition \eqref{keyassume}, it holds that for any $0 < T < T^*$,
\begin{equation} \label{rhohighesti}
\begin{aligned}
\sup_{ 0\le t \le T} (\|\rho_t \|_{W^{1,q}}+\|u\|_{H^2} + \|P\|_{H^1})+ \int_0^T (\| u \|_{W^{2,q}}^2 + \|P\|_{W^{1,q}}^2) dt  \le C.
\end{aligned}
\end{equation}
\end{lemma}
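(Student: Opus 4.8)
The plan is to close the a priori estimates by bootstrapping the regularity already obtained in Lemmas \ref{lem2} and \ref{lem3}, and then feeding it into the transport equation for $\nabla\rho$ and into the higher-order Stokes estimate \eqref{stokesestir}. Having $\sup_t\|\sqrt{\rho}u_t\|_{L^2}^2+\int_0^T\|\nabla u_t\|_{L^2}^2\,dt\le C$ from Lemma \ref{lem3}, I first upgrade the bound on $\|\nabla u\|_{H^1}$: applying the $L^2$ Stokes estimate exactly as in \eqref{lem3_9}--\eqref{lem3_10} gives $\|\nabla u\|_{H^1}+\|P\|_{H^1}\le C\|\sqrt{\rho}u_t\|_{L^2}+C\le C$, which yields the bounds on $\|u\|_{H^2}$ and $\|P\|_{H^1}$ in \eqref{rhohighesti} at once.

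Next I would estimate the time derivative of the density gradient. Differentiating the continuity equation $\eqref{NSK}_1$ in space gives a transport equation for $\nabla\rho$, namely $\partial_t\nabla\rho+u\cdot\nabla(\nabla\rho)+\nabla u\cdot\nabla\rho=0$ (using $\operatorname{div}u=0$). Applying $\nabla^2$ produces a similar transport equation for $\nabla^2\rho$ with forcing terms involving $\nabla^2 u\cdot\nabla\rho$ and $\nabla u\cdot\nabla^2\rho$. Testing the $\nabla\rho$ and $\nabla^2\rho$ equations against $|\nabla\rho|^{q-2}\nabla\rho$ and $|\nabla^2\rho|^{q-2}\nabla^2\rho$ respectively, and using $\operatorname{div}u=0$ to kill the pure convection term, I obtain a Gronwall inequality of the form
\begin{equation} \label{planrho}
\frac{d}{dt}\|\nabla\rho\|_{W^{1,q}}\le C\big(1+\|\nabla u\|_{L^{\infty}}+\|\nabla^2 u\|_{L^q}\big)\|\nabla\rho\|_{W^{1,q}}.
\end{equation}
The factor $\|\nabla u\|_{L^{\infty}}$ is controlled by $\|\nabla u\|_{W^{1,q}}$ via the Sobolev embedding $W^{1,q}\hookrightarrow L^{\infty}$ for $q>3$, so both unfavorable factors reduce to $\|u\|_{W^{2,q}}$, whose time-integrability I must establish. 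I would note that the bound on $\|\nabla\rho\|_{L^{\infty}(0,T;W^{1,q})}$ is already assumed in \eqref{keyassume}, so this step is really about showing that this bound, together with $\rho_t=-u\cdot\nabla\rho$, controls $\|\rho_t\|_{W^{1,q}}$; indeed $\|\rho_t\|_{W^{1,q}}\le C(\|u\|_{L^\infty}\|\nabla\rho\|_{W^{1,q}}+\|\nabla u\|_{L^{\infty}}\|\nabla\rho\|_{L^q}+\|\nabla u\|_{W^{1,q}}\|\nabla\rho\|_{L^{\infty}})$, which again is dominated by $\|u\|_{W^{2,q}}$ times the assumed density bounds.

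Everything thus hinges on the $L^2(0,T;W^{2,q})$ bound for $u$, which I would obtain from the $L^q$ Stokes estimate \eqref{stokesestir} with $F=-\rho\dot u-\operatorname{div}(\kappa(\rho)\nabla\rho\otimes\nabla\rho)$. This gives
\begin{equation} \label{planstokes}
\|u\|_{W^{2,q}}+\|P\|_{W^{1,q}}\le C(1+\|\nabla\rho\|_{L^{\infty}})\big(\|\rho\dot u\|_{L^q}+\|\operatorname{div}(\kappa(\rho)\nabla\rho\otimes\nabla\rho)\|_{L^q}\big).
\end{equation}
The capillary term is bounded using the assumed $W^{1,q}$ control of $\nabla\rho$, so the real work is to bound $\int_0^T\|\rho\dot u\|_{L^q}^2\,dt$. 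Here $\dot u=u_t+u\cdot\nabla u$, and I would estimate $\|\rho\dot u\|_{L^q}\le C(\|u_t\|_{L^q}+\|u\|_{L^{\infty}}\|\nabla u\|_{L^q})$; the second term is already controlled by the $H^2$ bound on $u$, while $\|u_t\|_{L^q}$ is controlled by $\|\nabla u_t\|_{L^2}$ through Gagliardo--Nirenberg and Sobolev embedding (for $q\le 6$), whose square is integrable in time by Lemma \ref{lem3}. Putting these together in \eqref{planstokes} yields $\int_0^T\|u\|_{W^{2,q}}^2\,dt\le C$, which closes \eqref{planrho} and \eqref{planstokes} simultaneously. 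The main obstacle I anticipate is the circularity between the density and velocity estimates: the $W^{2,q}$ bound on $u$ needs $\|\nabla\rho\|_{L^{\infty}}$ (i.e. $\|\nabla\rho\|_{W^{1,q}}$ via embedding), while the density estimate needs $\|u\|_{W^{2,q}}$; this is resolved precisely because \eqref{keyassume} supplies the density bound for free, breaking the loop and letting the velocity estimate stand on the already-established time-integrability of $\|\nabla u_t\|_{L^2}$.
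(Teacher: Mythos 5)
Your overall strategy coincides with the paper's: the sup bounds on $\|u\|_{H^2}$ and $\|P\|_{H^1}$ follow from Lemma \ref{lem3} together with \eqref{lem3_10}; the bound on $\rho_t$ comes from the continuity equation and the assumed control \eqref{keyassume} of $\nabla\rho$ in $W^{1,q}$; and the $L^2(0,T;W^{2,q})$ bound follows from the $L^q$ Stokes estimate \eqref{stokesestir} with the forcing controlled through $\int_0^T\|\nabla u_t\|_{L^2}^2\,dt\le C$. Your treatment of $\|\rho\dot u\|_{L^q}$ is in fact slightly more direct than the paper's: the paper interpolates $\|\rho u\cdot\nabla u\|_{L^q}$ between $\|\nabla u\|_{L^2}$ and $\|\nabla u\|_{W^{1,q}}$ and absorbs the latter by Young's inequality, whereas you simply invoke the already-established sup bound on $\|u\|_{H^2}$; both work for $3<q\le 6$. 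The auxiliary transport/Gronwall inequality for $\nabla\rho$ is, as you note yourself, not needed, since \eqref{keyassume} hands you the density bound for free.

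The one step that does not deliver what the lemma claims is your estimate of $\|\rho_t\|_{W^{1,q}}$. You bound it by a quantity containing $\|\nabla u\|_{L^{\infty}}\|\nabla\rho\|_{L^q}$ and $\|\nabla u\|_{W^{1,q}}\|\nabla\rho\|_{L^{\infty}}$, both of which require $\|u\|_{W^{2,q}}$; but $\|u\|_{W^{2,q}}$ is only shown to lie in $L^2(0,T)$, not in $L^{\infty}(0,T)$, so as written you obtain $\rho_t\in L^2(0,T;W^{1,q})$ rather than the claimed $\sup_{0\le t\le T}\|\rho_t\|_{W^{1,q}}\le C$. The repair is to avoid $L^{\infty}$ or $W^{1,q}$ norms of $\nabla u$ altogether: writing $\nabla\rho_t=-\nabla u\cdot\nabla\rho-u\cdot\nabla^2\rho$ and estimating $\|\nabla u\cdot\nabla\rho\|_{L^q}\le\|\nabla u\|_{L^6}\|\nabla\rho\|_{L^{6q/(6-q)}}$, as in \eqref{lem4_2}, uses only $\|u\|_{H^2}$ (bounded uniformly in time) together with $\|\nabla\rho\|_{W^{1,q}}$ from \eqref{keyassume}. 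With that replacement your argument matches the paper's and closes the lemma.
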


\begin{proof}
As a direct consequence of Lemma \ref{lem3} and \eqref{lem3_10}, we can easily conclude that
\begin{equation} \label{lem4_1}
\sup_{0 \le t \le T} (\|u\|_{H^2} +\|P\|_{H^1}) \le C.
\end{equation}
And, by use of the continuity equation $\eqref{NSK}_1$, one deduces that
\begin{equation} \label{lem4_2}
\begin{aligned}
\|\rho_t\|_{W^{1,q}} & \le C(\|\rho_t\|_{L^q} + \|\nabla \rho_t\|_{L^q})\\
& \le C(\|u\cdot\nabla \rho\|_{L^q} + \|\nabla (u\cdot\nabla\rho)\|_{L^q})\\
& \le C(\|u\|_{L^{\infty}}\|\nabla \rho\|_{L^q} + \|u\|_{L^{\infty}}\|\nabla^2\rho\|_{L^q}+ \|\nabla u\|_{L^6}\|\nabla \rho\|_{L^{\frac{6q}{6-q}}} )\\
&  \le C\|u\|_{H^2}\|\nabla \rho\|_{W^{1,q}},\\
\end{aligned}
\end{equation}
by the assumption \eqref{keyassume} and \eqref{lem4_1}, the boundedness of $\|\rho_t\|_{W^{1,q}}$ is verified.\\
Finally, apply \eqref{stokesestir} in Lemma \ref{stokesesti} with $F = -\rho u_t - \rho u\cdot\nabla u - \mathrm{div}(\kappa(\rho)\nabla \rho\otimes\nabla\rho)$ to get
\begin{equation} \label{lem4_3}
\begin{aligned}
\|\nabla u\|_{W^{1,q}} + \|P\|_{W^{1,q}}& \le C(1+\|\nabla \rho\|_{L^{\infty}})(\|\rho u_t\|_{L^q} +\| \rho u\cdot\nabla u\|_{L^q} \\
&~~\quad+\|\kappa(\rho)|\nabla^2\rho||\nabla\rho|\|_{L^q}+\|\kappa'(\rho)|\nabla\rho|^3\|_{L^q})\\
& \le C(\|\rho u_t\|_{L^q} +\| \rho u\cdot\nabla u\|_{L^q} +1) \\
& \le C(\|\sqrt{\rho}u_t\|_{L^2}^{\frac{6-q}{2q}}\|\sqrt{\rho}u_t\|_{L^6}^{\frac{3q-6}{2q}}+\|\nabla u\|_{L^2}^{\frac{6(q-1)}{5q-6}}\|\nabla u\|_{W^{1,q}}^{\frac{4q-6}{5q-6}} + 1),\\
\end{aligned}
\end{equation}
by Young's inequality and Sobolev embedding inequality, it can be easily seen that
\begin{equation} \label{lem4_4}
\begin{aligned}
\|\nabla u\|_{W^{1,q}}^2 + \|P\|_{W^{1,q}} ^2 &\le C\|\sqrt{\rho}u_t\|_{L^2}^{\frac{6-q}{q}}\|\nabla u_t\|_{L^2}^{\frac{3(q-2)}{q}}+ C\|\nabla u\|_{L^2}^{\frac{12(q-1)}{q}} + C\\
& \le C\|\sqrt{\rho}u_t\|_{L^2}^{\frac{6-q}{q}}\|\nabla u_t\|_{L^2}^{\frac{3(q-2)}{q}}+  C.\\
\end{aligned}
\end{equation}
Hence
\begin{equation} \label{lem4_5}
\begin{aligned}
\int_0^T (\|\nabla u\|_{W^{1,q}}^2 + \|P\|_{W^{1,q}} ^2)dt & \le C\int_0^T  \|\sqrt{\rho}u_t\|_{L^2}^{\frac{6-q}{q}}\|\nabla u_t\|_{L^2}^{\frac{3(q-2)}{q}} dt +  C\\
& \le C(\sup_{0 \le t\le T} \|\sqrt{\rho}u_t\|_{L^2}^2)^{\frac{6-q}{2q}} \int_0^T \|\nabla u_t\|_{L^2}^2 dt +C \\
& \le C,
\end{aligned}
\end{equation}
here the second inequality holds since $ q \le 3$.
Therefore we complete the proof of Lemma \ref{lem4}.
\end{proof}
\begin{proof}[Proof of Theorem 1.3]
In fact, in view of \eqref{gradvelo} and \eqref{rhohighesti}, it is easy to see that the functions $(\rho,u)(x, t=T^*) = lim_{t \rightarrow T^*}(\rho,u)$ have the same regularities imposed on the initial data \eqref{initialreg} at the time $t = T^*$. Furthermore, 
\begin{equation*}
\begin{aligned}
&-\mathrm{div}(2\mu(\rho)d) + \nabla P + \mathrm{div}(\kappa(\rho)\nabla \rho\otimes\nabla \rho)|_{t= T^*} \\ 
&\quad = \lim_{t \rightarrow T^*} \rho^{\frac{1}{2}}(\rho^{\frac{1}{2}}u_t + \rho^{\frac{1}{2}}u\cdot\nabla u) := \rho^{\frac{1}{2}}g|_{t = T^*}\\
\end{aligned}
\end{equation*}
with $g = (\rho^{\frac{1}{2}}u_t +\rho^{\frac{1}{2}}u\cdot\nabla u) |_{t=T^*} \in L^2$ due to \eqref{rhou_t}. Thus the functions $(\rho, u)|_{t=T^*}$ satisfy
the compatibility condition \eqref{initialcompa} at time $T^*$. Therefore we can take $(\rho, u)|_{t=T^*}$ as the initial data and apply the local existence theorem (Theorem \ref{local}) to extend the local strong solution beyond $T^*$. This contradicts the definition of maximal existence time $T^*$, and thus, the proof of Theorem \ref{thm} is completed.
\end{proof}



\end{document}